\theoremstyle{plain}
\newtheorem*{thm*}{Theorem}
\newtheorem{thm}{Theorem}
\Crefname{thm}{Theorem}{Theorems}
\newtheorem*{lem*}{Lemma}
\newtheorem{lem}[thm]{Lemma}
\Crefname{lem}{Lemma}{Lemmas}
\newtheorem*{claim*}{Claim}
\newtheorem{claim}[thm]{Claim}
\crefname{claim}{Claim}{Claims}
\Crefname{claim}{Claim}{Claims}
\newtheorem{prop}[thm]{Proposition}
\Crefname{prop}{Proposition}{Propositions}
\newtheorem{cor}[thm]{Corollary}
\crefname{cor}{Corollary}{Corollaries}
\crefname{conj}{Conjecture}{Conjectures}
\Crefname{qn}{Question}{Questions}
\newtheorem{obs}[thm]{Observation}
\Crefname{obs}{Observation}{Observations}
\Crefname{ex}{Example}{Examples}
\theoremstyle{definition}
\Crefname{prob}{Problem}{Problems}
\Crefname{defn}{Definition}{Definitions}
\theoremstyle{remark}
\renewenvironment{proof}[1][]{\begin{trivlist}
\item[\hspace{\labelsep}{\bf\noindent Proof#1.\/}] }{\qed\end{trivlist}}
\newcommand{\remove}[1]{}
\newcommand{\ceil}[1]{
    \left \lceil #1 \right \rceil
}
\newcommand{\floor}[1]{
    \left \lfloor #1 \right \rfloor
}
\newcommand{\eps}{\varepsilon}
\renewcommand{\P}{\mathcal{P}}
\DeclareMathOperator{\bip}{bip}
\newcommand{\CM}{\mathcal{CM}}
\newcommand{\F}{\mathcal{F}}
\newcommand{\rbip}[2]{r^{\bip}(#2, #1)}
\newcommand{\M}{\mathcal{M}}
\newcommand{\G}{\mathcal{G}}
\begin{document}

%\allowdisplaybreaks

\title{\vspace{-0.9cm} Multicolour bipartite Ramsey number of paths}

\author{
	Matija Buci\'c\thanks{
	    Department of Mathematics, 
	    ETH, 
	    8092 Zurich;
	    e-mail: \texttt{matija.bucic}@\texttt{math.ethz.ch}.
	}
	\and
    Shoham Letzter\thanks{
        ETH Institute for Theoretical Studies,
        ETH,
        8092 Zurich;
        e-mail: \texttt{shoham.letzter}@\texttt{eth-its.ethz.ch}.
        Research supported by Dr.~Max
        R\"ossler, the Walter Haefner Foundation and by the ETH Zurich Foundation.
    }
    \and
	Benny Sudakov\thanks{
	    Department of Mathematics, 
	    ETH, 
	    8092 Zurich;
	    e-mail: \texttt{benjamin.sudakov}@\texttt{math.ethz.ch}.     
		Research supported in part by SNSF grant 200021-175573.
	}
}
\date{}

\maketitle

\begin{abstract}
    \setlength{\parskip}{\smallskipamount}
    \setlength{\parindent}{0pt}
    \noindent
    
    \vspace{-0.9cm} 
    The \textit{$k$-colour bipartite Ramsey number} of a bipartite graph $H$ is the least integer $N$ for which every $k$-edge-coloured complete bipartite graph $K_{N,N}$ contains a monochromatic copy of $H$. 
    The study of bipartite Ramsey numbers was initiated over 40 years ago by Faudree and Schelp and, independently, by Gy\'arf\'as and Lehel, who determined the $2$-colour bipartite Ramsey number of paths. Recently the $3$-colour Ramsey number of paths and (even) cycles, was essentially determined as well. Improving the results of DeBiasio, Gy\'arf\'as, Krueger, Ruszink\'o, and S\'ark\"ozy, in this paper we determine asymptotically the $4$-colour bipartite Ramsey number of paths and cycles. We also provide new upper bounds on the $k$-colour bipartite Ramsey numbers of paths and cycles which are close to being tight.
\end{abstract}

\section{Introduction}
	Ramsey theory refers to a large body of mathematical results, which roughly say that any sufficiently large structure is guaranteed to have a large well-organised substructure. For example, the celebrated theorem of Ramsey \cite{ramsey1929problem} says that for any fixed graph $H$, every $k$-edge-colouring of a sufficiently large complete graph contains a monochromatic copy of $H$. The \emph{$k$-colour Ramsey number of $H$} is defined to be the smallest order of a complete graph satisfying this property.
	
	Despite significant attention paid to Ramsey problems, there are very few examples of families of graphs whose Ramsey numbers are known exactly, or even just asymptotically. An early example of an exact Ramsey result was obtained in 1967 by Gerencs\'er and Gy\'arf\'as \cite{ramsey2path}, who determined the $2$-colour Ramsey number of paths. Ramsey numbers of paths and cycles have since been studied extensively, and they are known precisely for two and three colours (in most cases only for sufficiently large $n$), see \cite{faudree-schelp-cycles,rosta-cycles-73,luczak99-con-match,kohayakawa-simonovits-skokan,ramsey3path,benevides-skokan}. However, despite extensive research, less is known for more than three colours. A rare exception is a recent result of Jenssen and Skokan \cite{jenssen-skokan}, who showed that the $k$-colour Ramsey number of an \emph{odd} cycle $C_n$ is exactly $2^{k-1}(n-1)+1$ for all sufficiently large $n$; interestingly, this does not hold for all $k$ and $n$, see Day and Johnson \cite{day-johnson}. For a path $P_n$, the $k$-colour Ramsey number is known to be at least $(k - 1 + o(1))n$ (see Yongqi, Yuansheng, Fengand and Bingxi \cite{Yongqi}), and at most $(k - 1/2 + o(1))n$ (see Knierim and Su \cite{knierim-su}); the same bounds also hold for \emph{even} cycles $C_n$.

	Over the years, many generalisations of Ramsey numbers have been considered (an excellent survey \cite{conlon2015recent} by Conlon, Fox and Sudakov contains many examples); one natural example that we consider here is obtained by replacing the underlying complete graph by a complete bipartite graph. In particular, the \textit{$k$-colour bipartite Ramsey number} of a bipartite graph $H$ is the least integer $N$ such that in any $k$-colouring of the complete bipartite graph $K_{N,N}$ there is a monochromatic copy of $H$. 
	
	The study of bipartite Ramsey numbers was initiated in the early 70s by Faudree and Schelp \cite{faudree-schelp} and independently Gy\'arf\'as and Lehel \cite{gyarfas2path} who determined the $2$-colour bipartite Ramsey numbers of paths; see also \cite{zhang1,zhang2,joubert} for some results regarding the natural extension to cycles. Recently, we \cite{3-bip-path} determined, asymptotically, the $3$-colour Ramsey number of a path or a cycle of length $2n$, showing that it is equal to $(3 + o(1))n$. Further related results were obtained by \cite{luo-peng,liu-peng}.
	
	Similarly to the standard Ramsey numbers, less is known regarding bipartite path-or-cycle Ramsey numbers for more than three colours. The best known lower bound for the $k$-colour bipartite Ramsey number of a path or a cycle of length $2n$ is $5n$ for $k = 4$ and $(2k-4)n$ for $k \ge 5$, while the best known upper bound is $k(1 + \sqrt{1 - 2/k} + o(1))n$ (which is roughly $(2k - 1 + o(1))n$ for large $k$). Both results are due to DeBiasio, Gy\'arf\'as, Krueger, Ruszink\'o and S\'ark\"ozy \cite{debiasio-multicolour} who also say that obtaining improvement to either of these bounds would be very interesting.
	
	In this paper we achieve this, improving the best known upper bound for all $k \ge 4$.
	\begin{thm} \label{thm:path-2k-3}
	    Let $k \ge 4$. The $k$-colour bipartite Ramsey number of a cycle or path of order $2n$ is at most $(2k - 3 + o(1))n$.
    \end{thm}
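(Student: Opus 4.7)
The proof follows the standard regularity-plus-connected-matchings reduction pioneered by \L{}uczak. Applying Szemer\'edi's regularity lemma to the $k$-edge-coloured $K_{N,N}$ with $N = (2k - 3 + \eps)n$, discarding irregular and low-density pairs, and retaining on each surviving regular pair only the majority colour produces an auxiliary $k$-edge-coloured bipartite graph $R$ on clusters of size $\approx N/t$ that is, up to a negligible fraction of edges, a complete bipartite graph $K_{t,t}$. Any monochromatic connected matching of size $s$ in $R$ can then be blown up, via the usual path-threading through regular pairs together with a short absorbing step, to a monochromatic path (and indeed a cycle) of length at least $(2 - o(1))\, s \cdot N / t$ in the original host. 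The theorem therefore reduces to the following combinatorial statement, which I will call the Key Lemma: in every $k$-edge-coloured $K_{t,t}$ some monochromatic component has a matching of size at least $\bigl(\tfrac{1}{2k-3} - o(1)\bigr) t$.

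I would prove the Key Lemma by induction on $k$, using the authors' earlier $3$-colour bipartite Ramsey result \cite{3-bip-path} as the base case. For the inductive step, let $\mu$ denote the maximum of $\nu(C)$ over all colours $c$ and all $c$-monochromatic components $C$, and suppose for contradiction that $\mu < t/(2k - 3)$. By K\"onig's theorem each such $C$ has a vertex cover of size at most $\mu$; combining these over the components of a single colour $c$ yields a set $X_c$ whose removal strips colour $c$ of all edges that are not incident to a few ``hub'' vertices. The plan is to choose a colour $c^*$ whose hub $X_{c^*}$ is small relative to $t$, delete it together with the remaining isolated edges of colour $c^*$, and recurse: the residual bipartite graph on $(t - O(\mu))$ vertices per side is coloured in only $k - 1$ colours, and the arithmetic identity $\tfrac{1 - 1/(2k-3)}{2k-5} = \tfrac{2k-4}{(2k-5)(2k-3)} > \tfrac{1}{2k-3}$ gives just enough slack for the induction to produce a connected matching larger than $\mu$, contradicting the choice of $\mu$.

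The main obstacle lies in implementing this recursion rigorously. The residual graph after deleting the hub is only \emph{almost} complete, as it is missing the deleted edges of colour $c^*$ and, from the regularity clean-up, also a small fraction of the original edges, and such non-edges can merge monochromatic components in unhelpful ways. One therefore needs to establish a strengthened form of the Key Lemma for bipartite host graphs with a small density deficit, which is where the ``virtual component'' framework developed in the $3$-colour case of \cite{3-bip-path} will have to be adapted to the multicolour setting. A further subtlety is the choice of $c^*$: picking the colour with the single largest component is not always the correct move, and the right approach is likely a weighted argument that simultaneously optimises over component size, the balance between the two sides of the bipartition, and the total hub cost. Finally, once the asymptotic connected matching is obtained, a standard absorption step upgrades the resulting path of length $2n - o(n)$ to a path of length exactly $2n$, respecting the bipartite parity of the host.
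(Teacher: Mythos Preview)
Your high-level plan matches the paper's: reduce via regularity and \L{}uczak's method to a connected-matching statement for (almost) complete bipartite graphs, and prove that statement by induction on $k$ with the $3$-colour result as base. However, the inductive step as you sketch it has a real gap.

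The difficulty is the choice of $c^*$. You define the hub $X_c$ as the union of the K\"onig covers of all $c$-components, but nothing prevents $|X_c|$ from being of order $t$ for every colour simultaneously (imagine each colour class being a disjoint union of roughly $2k-3$ copies of $K_{\mu,\mu}$). Removing such a hub leaves nothing to recurse on, and the arithmetic identity you quote, which budgets only $O(\mu) = O(t/(2k-3))$ for the deletion, collapses. The paper does not look for a colour with a globally small hub; instead it finds a colour in which only \emph{two} components need their covers removed. The mechanism is a double-counting argument on pairs $(v,e)$ with $v$ in some minimum cover $\tau_i$ and $e$ an edge of colour $c(i)$ incident to $v$, yielding that at least $N^2/n - 2(k-2)N > kn$ vertices lie in a \emph{unique} $\tau_i$. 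Such ``special'' vertices have degree at least $N-(k-1)n$ in their colour and hence sit in large components; pigeonhole forces more than $n$ of them in some colour, and the degree bound then forces them into exactly two components $C_i, C_j$, all on one side of the bipartition. Deleting $\tau_i \cup \tau_j$ (cost at most $2n$) and restricting the other side to $V(C_i) \cup V(C_j)$ produces a genuinely complete bipartite subgraph with no edges of that colour and both parts of size at least $N-2n \ge (2(k-1)-3)n+1$, so the induction applies directly. Your ``weighted argument'' hint does not yet reach this; the special-vertex count is the missing idea.

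A second point: the almost-completeness after regularity clean-up is not carried through the induction in the paper. The bound $r_k(n+1)\le (2k-3)n+1$ is first proved for complete $K_{N,N}$, and then a separate black-box reduction transfers it to bipartite hosts of minimum degree $N - \eps n$ in one shot. Because the inductive step above only deletes vertices, never edges, the subgraph passed to the inductive hypothesis is again complete bipartite, so your worry about adapting the virtual-component machinery inside the recursion is misplaced.
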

    As an immediate corollary (using the lower bound mentioned above, see also \Cref{obs:lower-bds}), we determine, asymptotically, the $4$-colour bipartite Ramsey number of a path or a cycle.
  	\begin{cor} \label{cor:4-col}
	    The $4$-colour bipartite Ramsey number of a cycle or path of order $2n$ is $(5 + o(1))n$.
    \end{cor}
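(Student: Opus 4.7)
The plan is to deduce Corollary~\ref{cor:4-col} as a one-line combination of Theorem~\ref{thm:path-2k-3} with the lower bound already recorded in the paper. For the upper bound, I would simply specialise Theorem~\ref{thm:path-2k-3} to $k = 4$, obtaining $(2 \cdot 4 - 3 + o(1))n = (5 + o(1))n$; this simultaneously bounds from above the $4$-colour bipartite Ramsey numbers of a path $P_{2n}$ and of a cycle $C_{2n}$, which is exactly the ``$\le$'' half of both statements in the corollary.

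For the matching lower bound I would invoke the construction of DeBiasio, Gy\'arf\'as, Krueger, Ruszink\'o and S\'ark\"ozy (recorded in this paper as Observation~\ref{obs:lower-bds}), which produces, for $N$ just below $5n$, a $4$-edge-colouring of $K_{N,N}$ whose every colour class contains no copy of $P_{2n}$. Since every $C_{2n}$ contains a $P_{2n}$ as a subgraph, the same colouring also avoids a monochromatic $C_{2n}$, so the lower bound $5n$ applies to the Ramsey number of the cycle as well as that of the path.

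There is essentially no obstacle: both bounds are already available, and the only small point to verify is that the path/cycle distinction does not interfere with either direction. For the upper bound this is built into the statement of Theorem~\ref{thm:path-2k-3}, which covers paths and cycles uniformly, while for the lower bound it follows from the trivial subgraph containment $P_{2n} \subseteq C_{2n}$. Putting the two bounds together yields the claimed asymptotic value $(5 + o(1))n$ for both the path and the cycle version of the $4$-colour bipartite Ramsey number.
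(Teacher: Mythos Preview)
Your proposal is correct and matches the paper's own reasoning: the corollary is stated as an immediate consequence of Theorem~\ref{thm:path-2k-3} specialised to $k=4$ together with the lower bound from \cite{debiasio-multicolour} recorded in Observation~\ref{obs:lower-bds}. Your remark that $P_{2n}\subseteq C_{2n}$ handles the cycle lower bound is exactly the content of the chain of inequalities in \eqref{eqn:ramsey-lower-bound}.
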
  
     
    For larger $k$, we are able to use our methods to improve on \Cref{thm:path-2k-3}.
	\begin{thm} \label{thm:path-2k-3.5}
	    Let $k \ge 8$. The $k$-colour bipartite Ramsey number of a cycle or path of order $2n$ is at most $\left(2k - 3.5 + \frac{1}{k-2} + o(1)\right)n$.
    \end{thm}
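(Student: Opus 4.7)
The plan is to follow the same two-stage strategy as in the proof of \Cref{thm:path-2k-3}. In the first stage we use the bipartite regularity lemma together with Łuczak's connected matchings technique---by now standard in the bipartite Ramsey setting---to reduce the theorem to the following finite statement: every $k$-edge-coloured $K_{N,N}$ with $N \ge \left(2k - 3.5 + \tfrac{1}{k-2} + \eps\right) n$ contains a monochromatic connected matching of size at least $n$. This reduction is essentially identical to the one used to derive \Cref{thm:path-2k-3}, and it converts a monochromatic path/cycle of length $2n$ into a monochromatic connected matching of size roughly $n$ in the reduced graph.

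The second stage is the real work: we must improve on the connected matching lemma that underlies \Cref{thm:path-2k-3}. The existing argument selects, for each colour $i$, a largest monochromatic connected matching $M_i$ lying inside some component $C_i$, together with its König vertex cover $S_i = S_i^A \cup S_i^B$ of size $|M_i|$ inside $C_i$. Under the assumption that every $|M_i| < n$, a double-counting of the sizes $|S_i^A|$ and $|S_i^B|$ forces $N < (2k-3)n$, and the colourings that saturate this bound are very rigid: every colour must contribute a large, balanced component.

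The improvement for $k \ge 8$ should come from splitting into two cases according to how close the colouring is to the extremal shape above. In a \emph{balanced} case, where every colour's largest component has vertex cover close to $n$ on both sides, having $k \ge 8$ colours available allows an averaging argument across colours to save roughly $n/2$ from the total cover, yielding the $-1/2$ improvement on $2k-3$. In an \emph{unbalanced} case, some colour has a noticeably smaller or more lopsided footprint; deleting the vertices of its largest component gives a subgraph $K_{N',N'}$ on which only $k-1$ colours appear nontrivially, and applying \Cref{thm:path-2k-3}'s $(k-1)$-colour connected matching bound concludes the argument. The slack $N - N'$ is precisely the $+\tfrac{1}{k-2}$ correction in the final constant.

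The main obstacle will be making this case split quantitatively sharp: the threshold separating ``balanced'' from ``unbalanced'' has to be chosen so that the averaging argument of the first case and the reduction to $k-1$ colours in the second case both yield exactly the constant $2k - 3.5 + \tfrac{1}{k-2}$, and the condition $k \ge 8$ is presumably exactly what forces the two case bounds to overlap rather than leave a gap. A secondary technicality is that every step must carry an $\eps$-slack to survive the regularity reduction, which complicates the near-extremal analysis of colourings sitting on the boundary of the case split.
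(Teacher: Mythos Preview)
Your first stage---the regularity reduction to connected matchings in $K_{N,N}$---is correct and is exactly what the paper does (see \Cref{sec:regularity} and \Cref{thm:cm-almost-complete}). The gap is in the second stage: your balanced/unbalanced dichotomy is not the paper's mechanism, and as written neither case goes through. In your ``unbalanced'' case you propose deleting the vertices of one colour's largest component to obtain a $(k-1)$-coloured subgraph; but removing one component does not eliminate that colour (other components of the same colour survive), so you do not get a $(k-1)$-coloured $K_{N',N'}$. In your ``balanced'' case you appeal to an unspecified ``averaging argument across colours'' that saves $n/2$; nothing of this shape appears in the paper, and it is not clear what quantity is being averaged or why $k\ge 8$ makes it work. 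Your description of the ``existing argument'' is also off: the paper does not pick one component per colour but considers \emph{all} monochromatic components $C_i$ with their minimum covers $\tau_i$, and the double counting is over pairs (cover vertex, incident edge of that colour), not over the cover sizes directly.

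What the paper actually does for the improvement is the following. \Cref{lem:special-vertices} lower-bounds the number of \emph{special} vertices (vertices lying in exactly one $\tau_i$) by $NM/n-(N+M)(k-2)+T(\min(N,M)/n-k)$, where $T$ counts \emph{somewhat special} vertices: cover vertices of a component that has a special vertex on the opposite side. The extra $T$-term is the new ingredient. In \Cref{lem:recursion} one shows that some colour has two special components whose covers meet the ``wrong'' side in at most $2n/(2k-5)$ vertices; removing those two covers (not one component) yields a genuinely $(k-1)$-coloured bipartite subgraph with one side of size $\ge N-2n$ and the other of size $\ge 2M-2(k-1)n-2n/(2k-5)$, to which one applies the asymmetric bounds of \Cref{thm:3.5-2,thm:3-3.5}. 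Iterating gives \Cref{thm:2k-7/2}, and the $\tfrac{1}{k-2}$ arises from matching these recursive side-lengths, not from any slack in a case split. You should replace the balanced/unbalanced heuristic with this special/somewhat-special machinery.
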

    
    We believe that the correct bound may be close to the latter theorem, namely about $(2k - 3.5 + o(1))n$. As evidence, we give such a lower bound for $k = 5$ which was initially found with the help of a computer. Despite our best efforts, we have not been able (either by computer search or by hand) to extend this bound to larger values of $k$. Nevertheless, we believe that such a bound may hold for all $k \ge 5$.
    \begin{thm} \label{thm:lower-5}
        The $5$-colour bipartite Ramsey number of a cycle or path of order $2(n+1)$ is larger than $6.5n$.
    \end{thm}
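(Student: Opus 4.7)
The strategy is to establish \Cref{thm:lower-5} by exhibiting an explicit $5$-edge-colouring of $K_{N,N}$ with $N > 6.5n$ that contains no monochromatic cycle or path of order $2(n+1)$. Any such path or cycle must occupy at least $n+1$ vertices on each side of $K_{N,N}$ and must lie inside a single monochromatic connected component; hence it suffices to construct a colouring in which, for every colour, every connected component has at most $n$ vertices on at least one of the two sides of the bipartition.

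I will use the block-colouring paradigm that underlies the earlier lower bounds for multicolour bipartite Ramsey numbers (including the $5n$ bound of DeBiasio, Gy\'arf\'as, Krueger, Ruszink\'o and S\'ark\"ozy for $k=4$). Fix an integer $p$, partition $A = A_1 \cup \dots \cup A_p$ and $B = B_1 \cup \dots \cup B_p$ into blocks of prescribed sizes summing to $N$, and colour every edge between $A_i$ and $B_j$ by a single colour $c_{ij} \in [5]$, encoded in a $p \times p$ matrix $C$. Each colour class is then a disjoint union of complete bipartite graphs between blocks, and its connected components are in bijection with the connected components of the auxiliary bipartite graph $H_c$ on $[p] \sqcup [p]$ whose edge set is $\{(i,j) : c_{ij} = c\}$. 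The vertex count of a monochromatic component on either side of $K_{N,N}$ equals the sum of block sizes indexed by the corresponding component of $H_c$, so the required property reduces to the purely combinatorial condition that, in each of the five graphs $H_c$, every connected component has total block-weight at most $n$ on at least one of its two sides.

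With this set-up, the proof proceeds in three concrete steps: (i) choose $p$ and block sizes summing to some $N > 6.5n$ (a natural choice is $p=13$ with blocks of size roughly $n/2$, adjusted slightly when $n$ is odd); (ii) write down the $p \times p$ matrix $C$ produced by computer search; and (iii) verify by direct case analysis, listing the components of each $H_c$ and summing block sizes, that the required condition holds for all five colours simultaneously.

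The main obstacle is clearly step (ii): the existence of a matrix $C$ achieving the threshold $6.5n$ is delicate, and all previously-known block constructions for $k=5$ reach only $6n$. As the authors emphasise, analogous attempts (including computer-assisted ones) to push the lower bound for $k \ge 6$ beyond $(2k-4)n$ have not succeeded, which suggests that the value $6.5n$ for $k=5$ is essentially tied to the existence of this one particular matrix. Once $C$ is exhibited, step (iii) is a routine inspection.
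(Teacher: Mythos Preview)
Your framework is the right one---the paper also proves \Cref{thm:lower-5} by a block (blow-up) construction---but as written your proposal has a genuine gap: step~(ii) is the entire content of the theorem, and you do not supply the matrix $C$. A plan that says ``write down the $p\times p$ matrix produced by computer search'' without exhibiting it is not yet a proof; everything else (the reduction to auxiliary components, the block-weight check) is routine once $C$ is in hand.

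It is also worth flagging that the paper's construction does \emph{not} fit the parameters or the sufficient condition you propose. The paper takes $p=7$ (not $13$), with six blocks of size $n$ and one block of size $\lfloor n/2\rfloor$ on each side. The colouring comes from an explicit $5$-colouring of $K_{7,7}$ minus a single edge $u_0v_0$ in which \emph{every} monochromatic component is a star; one then reinserts $u_0v_0$ in an existing colour, creating a single ``double-star'' component, and blows up $u_0,v_0$ by $\lfloor n/2\rfloor$ and all other vertices by $n$. The crucial point is that this double-star component, after blow-up, has \emph{more than $n$ vertices on each side}, so your sufficient condition ``at most $n$ vertices on at least one side'' fails for it. What saves the construction is the weaker (and correct) condition that the component has a vertex cover of size at most $n$---namely $U_0\cup V_0$, of size $2\lfloor n/2\rfloor\le n$---which already rules out a monochromatic $P_{2(n+1)}$ by K\"onig's theorem. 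So if you want to reach $6.5n$, you should replace your step-(iii) criterion by ``every monochromatic component has a vertex cover of size at most $n$'', and you will likely need unequal block sizes rather than thirteen blocks of size $n/2$.
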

	
    \subsection{Organisation of the paper}	

        In our proofs we use \L{}uczak's method of converting problems about cycles and paths to problems about connected matchings. The method requires us to work with the so-called \emph{reduced graph}, obtained by applying Szemer\'edi's regularity lemma, and look for a \textit{monochromatic connected matching} (i.e.\ a matching that is contained in a monochromatic component) in this graph, which is \emph{almost} complete bipartite. In our previous paper \cite{3-bip-path}, we showed that it suffices to consider connected matchings in complete bipartite graphs; we give more details on these two topics in \Cref{sec:regularity}. Our main efforts are thus devoted to the problem of finding monochromatic connected matchings in $k$-coloured complete bipartite graphs, which we consider in \Cref{sec:cm}. We describe the lower bound mentioned in \Cref{thm:lower-5}, as well as the best known lower bounds for general $k$, in \Cref{sec:lower-bounds}. We conclude the paper in \Cref{sec:conc-remarks} with some remarks and open problems.

\section{Monochromatic connected matchings in \texorpdfstring{$K_{n,n}$}{Kn,n}}\label{sec:cm}
    
    A \emph{connected matching} in a graph $H$ is a matching that is contained in a connected component of $H$, and a \emph{connected $k$-matching} is a connected matching that consists of $k$ edges. In an edge-coloured graph $H$, a \emph{$c$-coloured connected matching} is a connected matching in the subgraph of $H$ whose edges are the edges of $H$ of colour $c$. We shall use the notation $\CM(n)$ to denote the family of connected $n$-matchings. We write $G \overset{k}{\longrightarrow} \F$ if in every $k$-colouring of $G$ there is a monochromatic copy of some graph in $\F$, where $\F$ is some family of graphs. In particular, $G \overset{k}{\longrightarrow} \CM(n)$ means that in every $k$-colouring of $G$ there is a monochromatic connected $n$-matching. Let $r_k(n)$ denote the smallest integer $N$ such that for any $k$-colouring of $K_{N,N}$ there is a monochromatic connected $n$-matching. In this section we shall prove upper bounds on $r_k(n+1)$, from which \Cref{thm:path-2k-3,thm:path-2k-3.5} can be deduced, using {\L}uczak's method and our \Cref{thm:cm-almost-complete}. We shall make use of the following result which we proved in \cite{3-bip-path}.
    \begin{thm} \label{thm:cm-3}
        $r_3(n+1) = 3n+1$.
    \end{thm}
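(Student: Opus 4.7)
The proof has two parts. For the lower bound $r_3(n+1) \ge 3n+1$, I would exhibit a 3-colouring of $K_{3n,3n}$ that contains no monochromatic connected $(n+1)$-matching. Split both sides into blocks of size $n$, namely $A = A_0 \cup A_1 \cup A_2$ and $B = B_0 \cup B_1 \cup B_2$, and colour each edge from $A_i$ to $B_j$ with colour $(i+j) \bmod 3$. The graph of each colour is then a vertex-disjoint union of three copies of $K_{n,n}$, so every monochromatic component has matching number exactly $n$.

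For the upper bound $r_3(n+1) \le 3n+1$, my plan is to argue by contradiction. Assume a 3-colouring of $K_{N,N}$ with $N = 3n+1$ has no monochromatic connected $(n+1)$-matching. Then every monochromatic component $C$ has $\nu(C) \le n$. Since such a component is bipartite, König's theorem supplies a vertex cover $X_C \cup Y_C$ with $X_C \subseteq A$, $Y_C \subseteq B$ and $|X_C| + |Y_C| \le n$. This gives, in each colour $c$, a partition of $A$ and $B$ into ``covered'' and ``free'' vertices, with the key structural property that no colour-$c$ edge joins a free $A$-vertex to a free $B$-vertex inside the same colour-$c$ component.

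The core of the argument is to combine these three partitions to extract a contradiction. A naive union bound fails, because the total size of the covers in one colour is the sum of $\nu(C)$ over components, which can be arbitrarily large when there are many components. Instead, I would focus on the largest monochromatic component in each colour: pick a vertex $v \in A$ and partition $B$ into its three colour-neighbourhoods $N_1(v), N_2(v), N_3(v)$, whose sizes sum to $N = 3n+1$. By pigeonhole some $|N_c(v)| \ge n+1$; the colour-$c$ component containing $v$ then has $B$-side of size at least $n+1$, and the matching bound $\nu \le n$ together with König forces that component's $A$-side to be strongly controlled by a cover of size at most $n$ on the $A$-side (otherwise a large connected $c$-matching arises directly, e.g.\ via Hall's theorem applied to the free vertices). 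Iterating this analysis in the remaining two colours, and carefully tracking which vertices of $A$ and $B$ can play which roles in each colour, should yield the required contradiction.

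The principal obstacle is the tightness of the bound: the Latin-square construction shows there is no slack at $N = 3n$, so the proof must extract exactly the single extra vertex of gain when $N = 3n+1$. I expect the argument to proceed via a dichotomy: either some colour $c$ already contains a sufficiently large ``well-connected'' bipartite subgraph that directly yields a connected $(n+1)$-matching (via Hall-type reasoning in the component containing $v$), or the three-colour decomposition is so close to the Latin-square extremal pattern that the single extra vertex on each side cannot be accommodated without creating a crossing edge that enlarges the matching in one of the balanced $K_{n,n}$-like components to size $n+1$.
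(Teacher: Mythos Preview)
The paper does not actually prove this theorem; it quotes it from the authors' earlier paper \cite{3-bip-path} and uses it only as the base case of the induction in \Cref{thm:2k-3}. So there is no proof here to compare your argument against directly. Your lower-bound construction is correct and is precisely the blow-up construction alluded to in \Cref{obs:lower-bds}.

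Your upper-bound argument, however, is not a proof but a plan, and the plan has a genuine gap. After locating a colour $c$ and a vertex $v\in A$ with $|N_c(v)|\ge n+1$, you assert that K\"onig's theorem ``forces that component's $A$-side to be strongly controlled by a cover of size at most $n$ on the $A$-side''; but K\"onig only yields a cover of total size $\le n$ split arbitrarily between $A$ and $B$, and nothing you have written forces it to lie in $A$. More seriously, the sentences ``iterating this analysis in the remaining two colours, and carefully tracking \ldots\ should yield the required contradiction'' and ``I expect the argument to proceed via a dichotomy'' are hopes, not arguments. Because your own extremal example shows there is no slack at $N=3n$, any soft pigeonhole-and-iterate scheme of this shape will typically fall exactly one vertex short; you have not indicated where the extra vertex is actually extracted. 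The proof in \cite{3-bip-path} (and the general machinery in \Cref{lem:special-vertices} of the present paper) instead uses a global double-counting of pairs $(v,e)$ with $v$ a cover vertex and $e$ an incident edge of the same colour, which produces many ``special'' vertices lying in only one cover; this is the missing idea in your sketch.
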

    
    We begin by setting the stage and proving some simple propositions. Throughout this section, let $G$ be a $k$-coloured $K_{N,M}$, and denote its bipartition by $\{X, Y\}$, where $|X| = N$ and $|Y| = M$. 
    We assume that $G$ does not have a monochromatic connected $(n+1)$-matching. Let $C_1,\ldots, C_t$ denote all the monochromatic components of $G$ and let $c(i)$ denote the colour of component $C_i$. By K\"onig's theorem, each monochromatic component has a cover of size at most $n$; fix a minimum cover $\tau_i$ for $C_i$, so $|\tau_i| \le n$. Let us start by observing a simple property.
    
    \begin{claim}\label{claim:high-deg-col}
        Let $v$ be a vertex and suppose that it is incident with at least $n+1$ edges of colour $c$. Then $v \in \tau_i$ for some $i$ with $c(i) = c$.
    \end{claim}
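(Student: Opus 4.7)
The plan is to exploit the minimality condition on the cover $\tau_i$ combined with K\"onig's theorem via a short pigeonhole-style argument. First I would locate the component that all of $v$'s colour-$c$ edges live in: since colour-$c$ edges incident to $v$ necessarily lie in a single connected subgraph of the colour-$c$ graph, there is a unique index $i$ with $c(i) = c$ such that every colour-$c$ edge at $v$ belongs to $C_i$.

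Next I would use the cover $\tau_i$. By assumption, $\tau_i$ is a vertex cover of $C_i$ of size at most $n$, so in particular it must cover each of the (at least) $n+1$ colour-$c$ edges incident to $v$. I would then split into cases based on whether $v \in \tau_i$. If $v \in \tau_i$ we are done immediately, so suppose $v \notin \tau_i$. Then for each of the $n+1$ edges in question, the cover must use the other endpoint, which forces $\tau_i$ to contain at least $n+1$ distinct vertices (the $n+1$ distinct neighbours of $v$ on these edges). This contradicts $|\tau_i| \le n$, establishing $v \in \tau_i$.

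There is no real obstacle here: the argument is just König's bound combined with the observation that all high-multiplicity colour-$c$ edges at $v$ sit in one monochromatic component. The only subtlety worth stating explicitly in the write-up is that the $n+1$ edges indeed reach $n+1$ distinct vertices (which is automatic in a simple graph) and that they all lie in the same component $C_i$, so a single cover $\tau_i$ has to handle all of them.
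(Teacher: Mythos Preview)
Your proposal is correct and follows essentially the same approach as the paper: both identify the unique colour-$c$ component $C_i$ containing $v$ and use that $|\tau_i|\le n$ cannot cover all $n+1$ colour-$c$ edges at $v$ via the other endpoints. The paper phrases it directly (pick a neighbour $w\notin\tau_i$, so $v\in\tau_i$), while you phrase it by contradiction, but the argument is the same.
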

    \begin{proof}
        We call the colour $c$ red. Let $C_i$ be the red component that contains $v$ (so $v \in V(C_i)$ and $c(i) = c$). 
        We know, since $|\tau_i|\le n,$ that at least one of the red neighbours of $v$ is not in $\tau_i,$ say $w$. Since the edge $vw$ is covered by $\tau_i$, it follows that $v \in V(C_i)$, as claimed.
    \end{proof}
    
    We will always have $N,M \ge kn+1$ in our arguments since otherwise it is easy to find a $k$-colouring of $K_{N, M}$ without monochromatic connected $(n+1)$-matchings. This implies that every vertex has degree at least $n+1$ in some colour. By \Cref{claim:high-deg-col}, every vertex belongs to some $\tau_i$.
    We say that a vertex is \textit{special} if it belongs to exactly one $\tau_i$, in which case we say that it is \emph{special of colour $c(i)$}. Let us prove some simple properties of special vertices. 
    We call a component \textit{special} if it contains a special vertex of its colour. 
    
    \begin{prop}\label{prop:basic-special}
        The following assertions hold.
        \begin{enumerate}[label=\rm (\arabic{*}), ref=(\arabic{*})]
            \item \label{itm:special-deg} 
                Let $v \in X$ be a red special vertex. Then $v$ has red degree at least $M-(k-1)n.$ Similarly, if $v \in Y$ is a red special vertex then it has red degree at least $N - (k-1)n$.
            \item \label{itm:special-sides} 
                If there are two distinct special red components then all red special vertices belong to the same side of the bipartition.
        \end{enumerate}
    \end{prop}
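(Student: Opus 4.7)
My plan for part \ref{itm:special-deg} is to bound the non-red degree of a red special vertex $v \in X$ one colour at a time. Fix a non-red colour $c$, and let $C_j$ be the (unique) colour-$c$ component containing $v$. Because $v$ is special of colour red, $v$ lies in no $\tau_{\ell}$ with $c(\ell)\ne$ red; in particular $v\notin\tau_j$. Since $\tau_j$ is a cover of $C_j$, every colour-$c$ neighbour $w\in Y$ of $v$ must lie in $\tau_j$, giving at most $|\tau_j|\le n$ colour-$c$ edges from $v$. Summing over the $k-1$ non-red colours yields non-red degree at most $(k-1)n$, so red degree at least $M-(k-1)n$; the case $v\in Y$ is symmetric.

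For part \ref{itm:special-sides}, the key ingredient will be the following sub-claim: \emph{any edge between two red special vertices must itself be red}. Indeed, if $u,u'$ are both red special but the edge $uu'$ had some non-red colour $c$, then the cover $\tau_j$ of the colour-$c$ component $C_j$ containing $uu'$ would contain neither $u$ nor $u'$ (since $c(j)\ne$ red while $u,u'$ sit in no non-red $\tau$), contradicting that $\tau_j$ covers $uu'$.

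With this sub-claim in hand, I would proceed by contradiction: assume there are red special vertices $u\in X$ and $u'\in Y$ while $C_i\ne C_{i'}$ are two special red components containing red special vertices $v_i\in V(C_i)$ and $v_{i'}\in V(C_{i'})$. If $v_i,v_{i'}$ lie on opposite sides of the bipartition, the sub-claim applied to $v_iv_{i'}$ shows this edge is red, placing $v_i$ and $v_{i'}$ in the same red component and contradicting $C_i\ne C_{i'}$. Otherwise $v_i,v_{i'}$ share a side, say both in $X$; then the sub-claim applied to the edges $v_iu'$ and $v_{i'}u'$ shows both are red, so $v_i$ and $v_{i'}$ are joined in red through $u'$, again yielding $C_i=C_{i'}$. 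The only non-routine step is spotting the sub-claim; both parts follow from it almost mechanically.
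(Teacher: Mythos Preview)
Your proof is correct and follows essentially the same approach as the paper. For \ref{itm:special-deg} you unpack the content of \Cref{claim:high-deg-col} inline rather than citing it, but the argument is identical. For \ref{itm:special-sides} your sub-claim is exactly the paper's key observation (a non-red edge between two red special vertices would force one of them into a non-red cover); the paper then directly picks $v\in X$, $w\in Y$ red special in distinct red components and derives the contradiction, whereas you are a bit more explicit in reducing to that situation via your case analysis with $u,u',v_i,v_{i'}$---this extra care is harmless but not strictly needed, since once your sub-claim is established any red special vertex in $X$ and any red special vertex in $Y$ are forced into the same red component.
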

    \begin{proof}
        For \ref{itm:special-deg} notice that since $v$ is special in red it has degree at most $n$ in every colour other than red, by \Cref{claim:high-deg-col}. In particular, since $v$ has degree $M$ in $G$ and there are $k-1$ colours other than red, its red degree is at least $M-(k-1)n.$
    
        For \ref{itm:special-sides} assume to the contrary that there are two red special vertices $v$ and $w$, where $v \in X$ and $w \in Y$ and they belong to different red components. The edge $vw$ is not coloured red since $v$ and $w$ belong to different red components, so say that $vw$ is blue. But if $C_i$ is the blue component containing the edge $vw$ then at least one of $v$ and $w$ belongs to $\tau_i$, and in particular it is not a red special vertex, a contradiction. 
    \end{proof}
    
    We say that a vertex $v\in \tau_i$ is \textit{somewhat special} in colour $c(i)$ if it is not $c(i)$ special and there is a $c(i)$ special vertex in $C_i$ on the other side of the bipartition from $v.$ For a component $C_i$ let $t_i$ denote the number of somewhat special vertices of colour $c(i)$ in $C_i$, and let $T = \sum_{i=1}^t t_i.$ The double counting argument in the next lemma gives us a lower bound on the number of special vertices and is at the heart of most of our arguments.

	Before proceeding, let us give some intuition behind the concept of somewhat special vertices. Given a red component $C_i$, which contains a red special vertex in part $X$ of the bipartition, by \Cref{prop:basic-special}~\ref{itm:special-deg} we know that $Y \cap C_i$ is pretty big. Notice that all the vertices of $\tau_i \cap Y$ are somewhat special and moreover $C_i \setminus \tau_i$ spans no red edges, thus if we can show that $C_i$ contains only a few somewhat special vertices we obtain a rather large set not spanning any red edges. This set can be used to induct on the number of colours. The term depending on the somewhat special vertices in the following lemma allows us to show that there will indeed be few somewhat special vertices in each special component. 
    
	\begin{lem}\label{lem:special-vertices}
	    There are at least $NM/n-(N+M)(k-2)+T(\min(N,M)/n-k)$ special vertices.
	\end{lem}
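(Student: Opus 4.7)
The plan is to establish the bound via a double-count of pairs $(i, v)$ with $v \in \tau_i$. Write $P = \sum_i |\tau_i|$. Each special vertex contributes exactly $1$ to $P$ while each non-special vertex contributes at least $2$, so $P \ge 2(N+M) - s$, giving $s \ge 2(N+M) - P$. The whole problem thus reduces to producing a suitable upper bound on $P$.

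For such an upper bound I would use the following observation (essentially the content of \Cref{claim:high-deg-col}): if $v$ does not lie in the cover of its $c$-component, then all $c$-coloured neighbours of $v$ lie in that cover, whence $d_c(v) \le n$. Set $A(v) = \{c(i) : v \in \tau_i\}$ and write $d(v)$ for the degree of $v$ in $G$. Then $\sum_{c \in A(v)} d_c(v) \ge d(v) - (k - |A(v)|)\,n$. Summing over $v$, the left-hand side rearranges to $\sum_i \sum_{v \in \tau_i} d_{c(i)}(v)$, which counts each edge of $G$ once for every one of its endpoints in the cover of its component, and therefore equals $NM + D$, where $D$ is the number of edges both of whose endpoints lie in this cover. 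The right-hand side evaluates to $2NM - kn(N+M) + nP$. Rearranging yields $P \le k(N+M) - NM/n + D/n$ and hence the preliminary estimate $s \ge NM/n - (k-2)(N+M) - D/n$, which already gives the "vanilla" form of the bound up to the $D/n$ error.

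The final step is to absorb the $-D/n$ error and manufacture the $T(\min(N,M)/n - k)$ gain by analysing special components individually. For a special component $C_i$ of colour $c$, \Cref{prop:basic-special}~\ref{itm:special-sides} pins the $c$-specials to a single side of the bipartition (assuming at least two special components of this colour exist); the somewhat special vertices of $C_i$ then lie exactly on the opposite side in $\tau_i$, giving $\min(|\tau_i \cap X|, |\tau_i \cap Y|) = t_i$ and the other part of $\tau_i$ of size $\le n - t_i$. Combining this with \Cref{prop:basic-special}~\ref{itm:special-deg}, which ensures the bulk side of $C_i$ has at least $\min(N,M) - (k-1)n$ vertices, sharpens the per-component edge count: each somewhat special vertex effectively trades $n$ for $\min(N,M)$ in its contribution to $U + V$, aggregating to the $T(\min(N,M)/n - k)$ improvement. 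The main obstacle, I expect, is keeping the savings sign-consistent across components whose $c$-specials sit on different sides of the bipartition (so that the gains do not cancel) and handling the degenerate case of a unique special component of some colour, where \Cref{prop:basic-special}~\ref{itm:special-sides} does not apply; both should be manageable by a careful case-split on the side of the bipartition.
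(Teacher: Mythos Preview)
Your framework is the paper's: set $P=\sum_i|\tau_i|$, show $P\ge 2(N+M)-s$, and deduce the lemma from an upper bound on $P$. Your double count of $\sum_i\sum_{v\in\tau_i}d_{c(i)}(v)$ is also exactly the quantity $|\P|$ the paper considers. The difference is in how you extract the upper bound on $P$.

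The gap is real. Your paragraph~2 gives $P\le k(N+M)-NM/n+D/n$, so $s\ge NM/n-(k-2)(N+M)-D/n$, and the loss $-D/n$ must be recovered. Your paragraph~3 proposes to do this ``by analysing special components individually'', but $D=\sum_i D_i$ receives contributions from \emph{every} component with $x_iy_i>0$, special or not; nothing in your sketch bounds $D_i$ for non-special components. (Also, the assertion $\min(|\tau_i\cap X|,|\tau_i\cap Y|)=t_i$ is not correct in general, and the symbol $U+V$ is undefined.)

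The idea you are missing is the one consequence of \emph{minimality} of the covers that you never invoke: since $V(C_i)\cap X$ is also a vertex cover of $C_i$, we have $|V(C_i)\cap X|\ge |\tau_i|$, i.e.\ $z_i\ge y_i$, and symmetrically $w_i\ge x_i$. The paper uses this by bounding $|\P|$ from above component-by-component: writing $x_i,y_i,z_i,w_i$ for the obvious sizes,
\[
\sum_{v\in\tau_i}d_{c(i)}(v)\ \le\ 2x_iy_i+x_iw_i+y_iz_i\ =\ (|C_i|-|\tau_i|)|\tau_i|-\bigl(x_i(z_i-y_i)+y_i(w_i-x_i)\bigr),
\]
and then showing $x_i(z_i-y_i)+y_i(w_i-x_i)\ge t_i(\min(N,M)-kn)$ by a short case-split (for non-special $C_i$ this is just the nonnegativity coming from $z_i\ge y_i$, $w_i\ge x_i$, which is precisely what absorbs the double-counted edges). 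Summing and combining with the trivial lower bound $|\P|\ge NM$ gives $P\le k(N+M)-NM/n-T(\min(N,M)/n-k)$ with no $D$-term. Your route bounds $|\P|$ from below via degrees and then plugs in the exact value $|\P|=NM+D$; this is a tautology and cannot by itself eliminate~$D$. To rescue your argument you must bring in the minimality inequality $z_i\ge y_i$, $w_i\ge x_i$ somewhere, at which point you are effectively doing the paper's computation.
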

	
	\begin{proof}
	    Let $\P$ be the set of pairs $(v,e)$ where $v\in \tau_i$ and $e$ is an edge of colour $c(i)$ incident to $v$, for some $i$.
	    
	    Given an edge $e$, let $c$ be its colour, and let $C_i$ be the component of colour $c$ that contains $e$. Since $\tau_i$ is a cover of $C_i$, it contains one of the vertices in $e$, so there is at least one pair in $\P$ that contains $e$. In particular, 
	    \begin{equation} \label{eqn:P-lower-bound}
	        |\P| \ge NM.
	    \end{equation}
    
        On the other hand, given $i$, write $x_i = |\tau_i \cap X|$ and $y_i = |\tau_i \cap Y|$, and let $z_i = |(V(C_i) \cap X) \setminus \tau_i|$ and $w_i = |(V(C_i) \cap Y) \setminus \tau_i|$. In particular, $x_i+y_i = |\tau_i|\le n$ and $x_i+y_i+z_i+w_i=|C_i|$. Moreover, as $\tau_i$ is a minimum cover of $C_i$ of size $x_i + y_i$, and $V(C_i) \cap X$ is another cover of $C_i$, of size $x_i + z_i$, we find that $z_i \ge y_i$; similarly, $w_i \ge x_i$. 
        
		In fact, we know more: if there is a special vertex of $C_i$ in $Y$ then, by \Cref{prop:basic-special}~\ref{itm:special-deg}, $x_i+z_i \ge N-(k-1)n$ implying that $z_i-y_i \ge N-(k-1)n-x_i-y_i \ge N-kn.$ Similarly if $C_i$ has a special vertex in $X$ then $w_i-x_i \ge M-kn.$ We claim that the following inequality holds, where $t_i$ is the number of somewhat special vertices in $C_i$.
        \begin{align}\label{equ:special-on-the-other-side}
            x_i(z_i-y_i)+y_i(w_i-x_i) & \ge t_i(\min(N,M)-kn).
        \end{align} 
        Indeed, if $C_i$ is not a special component, then $t_i = 0$ and the inequality follows from the observation that $z_i \ge y_i$ and $w_i \ge x_i$. Now suppose that $C_i$ is a special component, but all its special vertices (in colour $c(i)$) appear in one side of the bipartition, say $X$. Then $t_i \le y_i$, hence, as explained above, $y_i(w_i - x_i) \ge y_i(M - kn) \ge t_i(\min(N, M) - kn)$. Finally, if $C_i$ contains special vertices in both sides of the bipartition, then $x_i(z_i-y_i)+y_i(w_i-x_i) \ge x_i (N - kn) + y_i (M - kn) \ge (x_i + y_i) (\min(N, M) - kn) \ge t_i (\min(N, M) - kn)$, where the last inequality holds because, trivially, $t_i \le x_i + y_i$.
        
        Now let us count the number of pairs $(v, e)$ in $\P$ for which $e$ is an edge of $C_i$. Let $e = vw$ be an edge of $C_i$. If both $v$ and $w$ are in $\tau_i$ then $e$ appears in two pairs of $\P$, and otherwise it appears in exactly one pair in $\P$. As there are at most $x_i y_i$ edges of the former type and at most $x_i w_i + y_i z_i$ of the latter type, we find that the number of pairs in $\P$ that contain an edge from $C_i$ is bounded from above by
        \begin{align*}
            2x_iy_i+z_iy_i+w_ix_i  & =  (z_i+w_i)(x_i+y_i)-x_i(z_i-y_i)-y_i(w_i-x_i)  \\
            & \le (|C_i|-x_i-y_i)n-t_i(\min(N,M)-kn) \\
            & = (|C_i|-|\tau_i|)n-t_i(\min(N,M)-kn).
        \end{align*}
        where we used the inequality in \eqref{equ:special-on-the-other-side} for the inequality. Summing over all components we obtain:
        \begin{align} \label{eqn:P-upper-bound}
        \begin{split}
            |\P| 
            &\le \sum_{i=1}^t \left(|C_i|-|\tau_i|-t_i\left(\frac{\min(N,M)}{n}-k\right)\right)n \\
            &= (N+M)kn-\sum_{i=1}^t |\tau_i|n-T\left(\frac{\min(N,M)}{n}-k\right)n,
        \end{split}
        \end{align}
        where we used the fact that the sum of sizes of components of any given colour is $N+M$, because every vertex belongs to exactly one component of each colour. 
        
        By comparing \eqref{eqn:P-lower-bound} and \eqref{eqn:P-upper-bound} we find that
        \begin{equation} \label{eqn:tau-upper-bound}
            \sum_{i=1}^t |\tau_i| \le (N+M)k - \frac{NM}{n}-T\left(\frac{\min(N,M)}{n}-k\right).
        \end{equation}
        Finally, denote the number of special vertices by $s$. Since every vertex is in at least one set $\tau_i$ and every non-special vertex is in at least two such sets, we have $\sum_{i=1}^t |\tau_i| \ge 2(N+M)-s$. Combining this with \eqref{eqn:tau-upper-bound} we obtain the inequality
        \begin{align*}
            s \ge \frac{NM}{n}-(N+M)(k-2)+T\left(\frac{\min(N,M)}{n}-k\right),
        \end{align*}
        as desired.
	\end{proof}

	Let us now use \Cref{lem:special-vertices} to obtain an upper bound on $r_k(n+1)$ which is tight for $k=4$ and is not far from being tight in general.
	
	\begin{thm} \label{thm:2k-3}
		For $k \ge 3$ we have $r_k(n+1) \le (2k-3)n+1.$
	\end{thm}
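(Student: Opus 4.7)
The plan is to proceed by induction on $k$, with \Cref{thm:cm-3} as the base case $k = 3$. For $k \ge 4$, assume $r_{k-1}(n+1) \le (2k-5)n+1$ and suppose for contradiction that a $k$-colouring of $K_{N,N}$ with $N = (2k-3)n+1$ has no monochromatic connected $(n+1)$-matching, so every cover satisfies $|\tau_i| \le n$. Applying \Cref{lem:special-vertices} with $N = M = (2k-3)n+1$ and dropping the non-negative $T$-contribution (since $k-3+1/n \ge 0$) gives
$$s \ge (2k-3)n + 2(k-1).$$

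For each colour $c$, let $r_c$ be the number of special components of colour $c$ and $s_c$ the number of $c$-special vertices. If $r_c \le 1$, all $c$-special vertices lie in a single cover, so $s_c \le n$. Call $c$ a \emph{Case B} colour if $r_c \ge 2$; by \Cref{prop:basic-special}~\ref{itm:special-sides} its special vertices then lie on one side, say $X$, and by \Cref{prop:basic-special}~\ref{itm:special-deg} each corresponding special component $C_i$ satisfies $|V(C_i) \cap Y| \ge (k-2)n + 1$. Vertex-disjointness of colour-$c$ components forces $r_c \cdot ((k-2)n+1) \le M$, hence $r_c = 2$ for $k \ge 4$. Let $C_{i_1}, C_{i_2}$ be the two special components and write $x_{i_j} = |\tau_{i_j} \cap X|$, $y_{i_j} = |\tau_{i_j} \cap Y|$; then $s_c \le x_{i_1} + x_{i_2} \le 2n - (y_{i_1} + y_{i_2})$.

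Now split into subcases. If every Case B colour satisfies $y_{i_1} + y_{i_2} \ge n + 2$, then $s_c \le n - 2$ in those colours, so with $\beta$ their number, $s \le (k-\beta)n + \beta(n-2) = kn - 2\beta$. Comparison with the lower bound gives $(3-k)n \ge 2(k-1) + 2\beta$, impossible for $k \ge 4$. Hence some Case B colour $c^*$ has $y_{i_1} + y_{i_2} \le n + 1$. Set
$$X' := X \setminus \tau_{i_1} \setminus \tau_{i_2}, \qquad Y^* := (\tau_{i_1} \cap Y) \cup (\tau_{i_2} \cap Y) \cup (Y \setminus V(C_{i_1}) \setminus V(C_{i_2})).$$
Any vertex of $X'$ either lies in some $V(C_{i_j}) \setminus \tau_{i_j}$, in which case its $c^*$-neighbours lie in $\tau_{i_j} \cap Y$ (by the covering property, since $V(C_{i_j}) \setminus \tau_{i_j}$ spans no $c^*$-edges), or in $X \setminus V(C_{i_1}) \setminus V(C_{i_2})$, in which case its $c^*$-component is different from $C_{i_1}, C_{i_2}$ and its $c^*$-neighbours lie in $Y \setminus V(C_{i_1}) \setminus V(C_{i_2})$. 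Either way they lie in $Y^*$, so $X'$ sends no $c^*$-edge into $Y \setminus Y^*$. One computes $|X'| \ge N - 2n = (2k-5)n + 1$ and, using $|V(C_{i_1}) \cap Y| + |V(C_{i_2}) \cap Y| \ge (2k-4)n + 2$, $|Y \setminus Y^*| \ge M - (n+1) - (n-1) = (2k-5)n + 1$. The inductive hypothesis applied to a $(k-1)$-coloured $K_{(2k-5)n+1,\,(2k-5)n+1}$ inside $(X', Y \setminus Y^*)$ yields the desired monochromatic connected $(n+1)$-matching, a contradiction.

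The main obstacle is that the lower bound on $s$ only narrowly exceeds $kn$, so naive pigeonhole across colours does not suffice. The leverage lies in the rigid structure of Case B colours: each consists of exactly two special components whose $Y$-parts nearly fill $Y$, placing $x_{i_1}+x_{i_2}$ (which bounds $s_c$) in direct tension with $y_{i_1}+y_{i_2}$ (which controls how much of $Y$ must be deleted to erase colour $c^*$). Either the $y$'s are large, dragging the global count of special vertices below the required threshold, or they are small, yielding a balanced $c^*$-free bipartite subgraph on which induction closes.
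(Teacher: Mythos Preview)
Your proof is correct and follows essentially the same approach as the paper: induction on $k$ with \Cref{thm:cm-3} as the base case, the special-vertex count from \Cref{lem:special-vertices}, identification of a colour with exactly two special components, and then the same pair of sets $X' = X \setminus (\tau_{i_1} \cup \tau_{i_2})$ and $Y \setminus Y^* = (Y \cap (V(C_{i_1}) \cup V(C_{i_2}))) \setminus (\tau_{i_1} \cup \tau_{i_2})$ on which to apply the inductive hypothesis.

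The only substantive difference is your subcase split on whether every Case~B colour satisfies $y_{i_1}+y_{i_2} \ge n+2$, which is an unnecessary detour. The paper argues more directly: since $s > kn$, pigeonhole gives a colour with at least $n+1$ special vertices; for that colour the special vertices already lie in $(\tau_{i_1}\cup\tau_{i_2})\cap X$, and since $|\tau_{i_1}|+|\tau_{i_2}|\le 2n$ one gets $y_{i_1}+y_{i_2}\le n-1$ immediately. Your dichotomy recovers the same conclusion (with the slightly weaker $y_{i_1}+y_{i_2}\le n+1$, which still suffices), but the global contradiction in the first subcase is not needed once you observe that any colour with $s_c > n$ already forces its $Y$-cover to be small.
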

    
    \begin{proof}
        Our proof is by induction on $k$. The base case of $k=3$ follows from our previous result \Cref{thm:cm-3}. 
        For the induction step, let $k \ge 4$ and $N = (2k-3)n + 1$. Assume that $r_{k-1}(n+1)\le (2k-5)n+1 = N - 2n$, so every $(k-1)$-coloured $K_{N-2n,N-2n}$ contains a monochromatic connected $(n+1)$-matching.
    
        Suppose towards a contradiction that $G$ is a $k$-coloured $K_{N,N}$ that does not have an connected $(n+1)$-matching. We shall be using the notation introduced earlier in the section. In particular, we assume that $C_1, \ldots, C_t$ is the list of monochromatic components of $G$ and $\tau_i$ is some minimum cover of $C_i$, for every $1 \le i \le t$.
        By \Cref{lem:special-vertices}, there are at least $N^2/n-2N(k-2) > N( 2k-3 - 2(k-2)) = N \ge kn+1$ special vertices. In particular, there are at least $n+1$ special vertices of the same colour, say red. 
     
        \begin{claim}
            Precisely two red components contain red special vertices. 
        \end{claim}
        \begin{proof}
            Notice that any red special vertex is a cover vertex of its own red component. Since each component contains at most $n$ cover vertices, at least two distinct red components contain a red special vertex. Together with \Cref{prop:basic-special}~\ref{itm:special-sides}, it follows that all the red special vertices are in the same side of the bipartition of $G$, say $X$. By \ref{itm:special-deg} in the same proposition, every red special vertex has red degree at least $(2k-3)n+1-(k-1)n=(k-2)n+1$. In particular, every red special component contains at least $(k-2)n+1$ vertices of $Y$ and these sets are disjoint for distinct components. But $|Y| = (2k-3)n+1 < 3((k-2)n+1)$, so at most two red components contain red special vertices.
        \end{proof}
     
        Let $C_i$ and $C_j$ be the red components that contain red special vertices. Recall that all red special vertices are in the same side of $G$ (by \Cref{prop:basic-special}~\ref{itm:special-sides}), say $X$. Let $X':=X \setminus (\tau_i \cup \tau_j)$ and $Y':=(Y \cap (V(C_i) \cup V(C_j))) \setminus (\tau_i \cup \tau_j).$ Note that $|X'|\ge |X|-2n =N-2n$. Moreover, since all the red special vertices are contained in $(\tau_i \cup \tau_j) \cap X$, and there are at least $n+1$ such vertices, we have $|(\tau_i \cup \tau_j) \cap Y| \le n-1$. Since each of $C_i$ and $C_j$ contains at least $(k-2)n+1$ vertices of $Y$ (by \Cref{prop:basic-special}~\ref{itm:special-deg}), we have $|Y'| \ge 2((k-2)n+1)-|(\tau_i \cup \tau_j) \cap Y| \ge (2k-5)n+3 > N-2n$. Notice also that since neither $X'$ nor $Y'$ contain a vertex of $\tau_i$ or $\tau_j$, and $Y' \subseteq V(C_i) \cup V(C_j)$, there are no red edges between $X'$ and $Y'.$ This means that $G[X', Y']$ is a $(k-1)$-coloured complete bipartite graph where each side has size at least $N-2n$. By our inductive assumption, $G[X', Y']$ contains a monochromatic connected $(n+1)$-matching, a contradiction.
    \end{proof}
    
    \begin{thm}\label{thm:3.5-2}
        Let $k \ge 4$, $N = \ceil{(2k-3.5)n + 1}$ and $M = (2k-2)n + 1$. Then $K_{N, M} \overset{k}{\longrightarrow} \CM(n+1)$.
        %For $k\ge 6$ we have $\R_k((2k-7/2)n+1,(2k-2)n+1)\ge n+1$
    \end{thm}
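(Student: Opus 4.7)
The plan is to proceed by induction on $k$, following the template of \Cref{thm:2k-3}. Let $G$ be a putative $k$-colouring of $K_{N,M}$ with no monochromatic connected $(n+1)$-matching, let $C_1,\ldots,C_t$ denote its monochromatic components, and let $\tau_i$ denote their minimum covers (each of size at most $n$). A direct calculation with \Cref{lem:special-vertices} gives at least $NM/n - (N+M)(k-2) \ge (2.5k - 4)n + \Theta(1) > kn$ special vertices for all $k \ge 4$, so by pigeonhole some colour, call it red, has at least $n+1$ special vertices. Arguing as in the proof of \Cref{thm:2k-3}---combining $|\tau_l| \le n$, \Cref{prop:basic-special}~\ref{itm:special-deg}, and the values of $N, M$---red must have exactly two special components $C_i, C_j$, and by \Cref{prop:basic-special}~\ref{itm:special-sides} all red special vertices lie on one side of the bipartition.

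In the easy case (red specials in $X$), I would take $X' := X \setminus (\tau_i \cup \tau_j)$ and $Y' := (Y \cap (V(C_i) \cup V(C_j))) \setminus (\tau_i \cup \tau_j)$. Since each $V(C_l) \cap Y$ has at least $(k-1)n + 1$ vertices by \Cref{prop:basic-special}~\ref{itm:special-deg}, this yields $|X'| \ge N - 2n \ge (2k - 5.5)n + 1$ and $|Y'| \ge 2((k-1)n + 1) - (n - 1) = (2k - 3)n + 3$, matching the thresholds $\lceil (2(k-1)-3.5)n + 1\rceil$ and $(2(k-1)-2)n + 1$ of the inductive hypothesis. Since there are no red edges between $X'$ and $Y'$, induction completes this case. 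The base case $k = 4$ follows analogously, with \Cref{thm:cm-3} (or a suitable asymmetric 3-colour variant from \cite{3-bip-path}) replacing the inductive hypothesis.

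The main obstacle is the case when all red specials lie in $Y$. Now \Cref{prop:basic-special}~\ref{itm:special-deg} only yields $|V(C_l) \cap X| \ge (k - 2.5)n + 1$, so the analogous setup (swapping the roles of $X$ and $Y$ in the definitions of $X', Y'$) produces $|X'| \ge (2k - 6)n + 3$ and $|Y'| \ge M - 2n = (2k - 4)n + 1$. Here $|X'|$ falls short of the inductive threshold $\lceil (2k - 5.5)n + 1 \rceil$ by roughly $n/2$, blocking a direct appeal to induction. I expect the fix to exploit the $T$-term of \Cref{lem:special-vertices}: in this case every vertex of $(\tau_i \cup \tau_j) \cap X$ is automatically a red somewhat-special vertex, since it lies in some $\tau_l$, is not red-special (no red specials lie in $X$), and the opposite side of its component contains red specials. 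This forces $T \ge |(\tau_i \cup \tau_j) \cap X|$ and boosts the special-vertex count by roughly $(k - 3.5)$ per such vertex, which I expect to either produce a second colour with at least $n+1$ special vertices on the $X$-side (reducing to Case 1 with a different choice of red) or pin down the structure of $C_i, C_j$ tightly enough to finish by a direct argument.
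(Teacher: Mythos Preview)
Your inductive plan has a fatal gap: Case~1 (red specials in $X$) is vacuous, so the ``easy case'' never occurs and you are left entirely in Case~2. The point you missed is that a special vertex of colour $c$ in $X$ has $c$-degree at least $M-(k-1)n=(k-1)n+1>M/2$, so any two $c$-special vertices in $X$ lie in the \emph{same} $c$-component. Hence if some colour had $\ge n+1$ specials all in $X$, they would all lie in a single $\tau_i$, contradicting $|\tau_i|\le n$. Combined with \Cref{prop:basic-special}~\ref{itm:special-sides}, this forces every colour with more than $n$ special vertices to have all of them in $Y$. In particular, your proposed fix for Case~2---hoping the $T$-term produces a second colour with $\ge n+1$ specials on the $X$-side---is impossible for exactly the same reason. (Your base case is also shaky: for $k=4$ you would need an asymmetric $3$-colour statement like $K_{\lceil 2.5n+1\rceil,\,4n+1}\overset{3}{\longrightarrow}\CM(n+1)$, which is not \Cref{thm:cm-3}.)

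The paper's proof is not inductive in this sense. It exploits precisely the observation above: since every colour with more than $n$ specials has them all in $Y$, and since there are at most two special components (hence at most $2n$ specials) per colour, the total number of specials is at most $(2k-2)n+1=|Y|$ plus at most $n$ for a single possible ``bad'' colour. For $k\ge 6$ the count from \Cref{lem:special-vertices} already exceeds $(2k-1)n+1$, an immediate contradiction. For $k=4,5$ one shows that at most one colour can have $\le n$ specials; for each of the remaining $k-1$ colours one either applies \Cref{thm:2k-3} directly (if the two special components miss enough of $Y$), or feeds the resulting structural information back into the estimate of \Cref{lem:special-vertices} (via the terms $y_i(w_i-x_i)$) to push the special count above $2kn$, again a contradiction.
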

    \begin{proof}
        Suppose, towards a contradiction, that $G$ is a $k$-coloured $K_{N,M}$ that does not have a monochromatic connected $(n+1)$-matching. As usual, we shall be using the notation introduced earlier in the section. 
        By \Cref{lem:special-vertices} we know that $G$ contains at least the following number of special vertices.
        \begin{align}\label{eqn:specialt2}
        \begin{split}
            & \frac{NM}{n}-(N+M)(k-2) \\
            \ge\,\, & \frac{1}{n} ((2k-3.5)n + 1)((2k-2)n + 1) - ((4k-5.5)n + 2.5)(k-2) \\ 
            =\,\, & 2.5 kn - 4n + 1.5 k - 0.5 + \frac{1}{n}  \\
            >\,\, & (2k-1)n+1,
        \end{split}
        \end{align}
		where in the first inequality we used $N = \ceil{(2k-3.5)n + 1} \le (2k-3.5)n+1.5$ and in the last inequality $k \ge 6$. Note that any special vertex of colour $c$ in $X$ has $c$-degree at least $M - (k-1)n > M/2.$ This means that $X$ does not contain two special vertices of the same colour which belong to distinct components of this colour. Similarly, since every special vertex of colour $c$ in $Y$ has $c$-degree at least $N - (k-1)n > N/3$, $Y$ does not contain three special vertices of colour $c$, each from a distinct component. In particular, by \Cref{prop:basic-special}~\ref{itm:special-sides}, there are at most $2n$ special vertices of any colour and if there are more than $n$ they all lie in $Y.$ Together with \eqref{eqn:specialt2} this implies that every colour must have more than $n$ special vertices, so they all lie in $Y,$ which is a contradiction since $|Y|=(2k-2)n+1<(2k-1)n+1.$  

        For $k=4,5$ we have $5kn/2-4n+2>(2k-2)n+2$ so we know there can be at most one colour which has at most $n$ special vertices and all other colours have all their special vertices in $Y.$ If all colours have more than $n$ then we can reach a contradiction as before, so we assume that exactly $k-1$ colours have more than $n$ special vertices. For any of these $k-1$ colours, if $C_i,C_j$ are the special components of this colour we know $|(C_i \cup C_j)\cap X| \ge (2(k-1)-3)n+1$ so if $|Y \setminus (C_i \cup C_j)| \ge (2(k-1)-3)n+1$ we are done by \Cref{thm:2k-3}. In particular, we may assume that $|Y \cap (C_i \cup C_j)|\ge 3n+1.$ Inheriting notation from the proof of \Cref{lem:special-vertices} this means that $y_i+w_i+y_j+w_j \ge 3n+1$, so (using $w_i \ge x_i$, $x_i + y_i \le n$, and the analogous inequalities for $j$)
		\begin{align*}
			y_i(w_i-x_i)+y_j(w_j-x_j) 
			& \ge y_i\max(w_i+y_i-n,0)+y_j \max(w_j+y_j-n,0) \\
			& \ge \min(y_i, y_j) (y_i + w_i + y_j + w_j - 2n) \\
			& \ge \min(y_i, y_j) (n+1) \\
			& > (y_i + y_j - n)n.
		\end{align*}
		Now, we find that the sum of $x_i(z_i - y_i) + y_i(w_i - x_i)$ over all monochromatic components $C_i$, is at least the sum of $y_i(w_i - x_i)$ over all components with colours with more than $n$ special vertices (using $w_i \ge x_i$ and $z_i \ge y_i$), which by the above inequality is at least $(s - n)n - (k-1)n^2 \ge (k-2)n^2 \ge 2n^2$, where $s$ is the number of special vertices. Continuing as in the proof of \Cref{lem:special-vertices}, we find that the number of special vertices is, in fact, at least $NM/n - (N+M)(k-2) + 2n \ge 2kn + 2$, a contradiction. 
    \end{proof}
    %It is possible to prove that this theorem also holds for $k=4,5$ (but it is false for $k=3$) but this requires a more careful estimate in \Cref{lem:special-vertices} and an additional argument, so we omit the proof of this claim.
    
    In the above two theorems we used a weaker estimate than the one provided by \Cref{lem:special-vertices}, namely we ignored the term that includes $T$, the number of somewhat special vertices.  In the next lemma, which will lead to the proof of our main results, \Cref{thm:path-2k-3,thm:path-2k-3.5}, we shall make use of the stronger statement given by \Cref{lem:special-vertices}. 
    
    \begin{lem}\label{lem:recursion}
        Let $k\ge 5$ and let $N, M$ be integers that satisfy the following conditions.
        \begin{enumerate}
            \item \label{itm:N-M-lower}
                $N, M \ge (2k - 3.5)n$,
            \item \label{itm:NM-lower}
                $NM/n - (k-2)(N+M) > kn$,
            \item \label{itm:recursion-a}
	            $K_{N', M'} \overset{k-1}{\longrightarrow} \CM(n+1)$,
                when $N' = N - 2n$ and $M' = 2M - 2n(k-1) - \frac{2n}{2k-5}$,
            \item \label{itm:recursion-b}
	            $K_{N', M'} \overset{k-1}{\longrightarrow} \CM(n+1)$,
                when $N' = 2N - 2n(k-1) - \frac{2n}{2k-5}$ and $M' = M - 2n$.
        \end{enumerate}
        Then $K_{N, M} \overset{k-1}{\longrightarrow} \CM(n+1)$.
    \end{lem}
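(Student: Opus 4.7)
The plan is to proceed by contradiction: suppose $G$ is a $k$-coloured $K_{N,M}$ without a monochromatic connected $(n+1)$-matching, and carry over all notation introduced at the start of the section. Applying \Cref{lem:special-vertices} with condition 2 and the bound $\min(N,M)/n - k \ge k - 3.5$ from condition 1 gives
\[
    s > kn + T(k - 3.5).
\]
The strategy is to locate a single colour whose two special components carry only a small cover on the non-special side, peel off those two covers, and invoke condition 3 or 4 on the surviving $(k-1)$-coloured complete bipartite subgraph.

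Call a colour $c$ \emph{ample} if $s_c > n$. Since $|\tau_i| \le n$, an ample colour must have at least two special components; \Cref{prop:basic-special}~\ref{itm:special-sides} then forces all $c$-special vertices onto a single side of the bipartition, while the degree bound in \Cref{prop:basic-special}~\ref{itm:special-deg} (applied with $M, N > \tfrac{3}{2}(k-1)n$, a consequence of condition 1 for $k \ge 5$) caps the number of special $c$-components at two, say $C_{i_c}$ and $C_{j_c}$. In this situation every vertex of $(\tau_{i_c} \cup \tau_{j_c})$ on the non-special side is somewhat special of colour $c$, so if $T_c$ denotes the somewhat special count in colour $c$ then $T_c = |\tau_{i_c} \cap B_c| + |\tau_{j_c} \cap B_c|$, where $B_c$ is the non-special side, and hence $s_c + T_c \le |\tau_{i_c}| + |\tau_{j_c}| \le 2n$.

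The crux---and the main obstacle---is the following averaging claim: some ample colour $c^{\star}$ satisfies $T_{c^{\star}} \le 2n/(2k-5)$. I would prove this by contradiction. Suppose every ample colour has $T_c > 2n/(2k-5)$, so $s_c < 2n - 2n/(2k-5)$; every non-ample colour has $s_c \le n < 2n - 2n/(2k-5)$ (since $k \ge 5$). Writing $\ell$ for the number of ample colours, summation yields
\[
    s < \ell\!\left(2n - \frac{2n}{2k-5}\right) + (k-\ell)\, n = kn + \frac{\ell n (2k-7)}{2k-5}.
\]
Combining with $s > kn + T \cdot (2k-7)/2$ produces $T < 2\ell n/(2k-5)$. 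On the other hand $T \ge \sum_{c \text{ ample}} T_c > 2\ell n/(2k-5)$, a contradiction (the degenerate case $\ell = 0$ already contradicts $s > kn$). Note that a naive pigeonhole choice (the colour maximising $s_c$) is too weak here; the argument really needs to exploit both sides of the $T$-dependence in \Cref{lem:special-vertices}.

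With such a $c^{\star}$ in hand, the conclusion is routine. Assume its special vertices lie in $X$; the other case is symmetric and uses condition 4 in place of condition 3. Set $X' = X \setminus (\tau_{i_{c^{\star}}} \cup \tau_{j_{c^{\star}}})$ and $Y' = (Y \cap (V(C_{i_{c^{\star}}}) \cup V(C_{j_{c^{\star}}}))) \setminus (\tau_{i_{c^{\star}}} \cup \tau_{j_{c^{\star}}})$. Then $|X'| \ge N - 2n = N'$; the degree bound from \Cref{prop:basic-special}~\ref{itm:special-deg} together with $T_{c^{\star}} \le 2n/(2k-5)$ gives $|Y'| \ge 2M - 2(k-1)n - 2n/(2k-5) = M'$; and since $Y'$ lies entirely inside $V(C_{i_{c^{\star}}}) \cup V(C_{j_{c^{\star}}})$ and misses both covers, $G[X',Y']$ carries no edges of colour $c^{\star}$. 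Thus $G[X',Y']$ is a $(k-1)$-coloured $K_{N',M'}$, and condition 3 produces the forbidden monochromatic connected $(n+1)$-matching.
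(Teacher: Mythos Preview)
Your proof is correct and follows essentially the same strategy as the paper's: derive $s > kn + T(k-3.5)$ from \Cref{lem:special-vertices}, locate a colour with two special components and at most $2n/(2k-5)$ somewhat special vertices, then peel off the two covers and apply condition~3 or~4. The only difference is in how you prove the key claim: the paper lets $t$ be the minimum of $T_c$ over ample colours, bounds $T \ge mt$, pigeonholes to find a colour with $s_c \ge n + t(k-3.5)$, and concludes $t \le n - t(k-3.5)$ directly; you instead assume every ample $T_c$ exceeds $2n/(2k-5)$ and derive two incompatible bounds on $T$. Both arguments rest on the same two ingredients ($T \ge \sum_{c\text{ ample}} T_c$ and $s_c + T_c \le 2n$), so this is a cosmetic repackaging rather than a different idea.
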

      
    \begin{proof}
        Suppose, towards a contradiction, that $G$ is a $k$-coloured $K_{N,M}$ that does not have an connected $(n+1)$-matching. 
		Recall that, by \Cref{prop:basic-special}~\ref{itm:special-deg}, every special vertex of colour $c$ in $X$ has $c$-degree at least $M - (k-1)n > M/3$, thus there are at most two special vertices of colour $c$ in $X$ that belong to distinct components. It follows from this and the corresponding statement for $Y$, as well as \Cref{prop:basic-special}~\ref{itm:special-sides}, that at most two components of any given colour contain special vertices of that colour.
        
        \begin{claim}
            There is a colour in which there are precisely two components that contain special vertices, and which has most $2n/(2k-5)$ somewhat special vertices.
        \end{claim}
        
        \begin{proof}
            By \Cref{lem:special-vertices}, the number of special vertices is at least 
            \begin{equation} \label{eqn:num-special-vs}
                \frac{NM}{n} - (k-2)(N+M) + T\left(\frac{\min(N,M)}{n} - k\right) > kn + T(k - 3.5),
            \end{equation}
            where we used Conditions \ref{itm:N-M-lower} and \ref{itm:NM-lower}.
            Let $S$ be the set of colours in which there are at least $n+1$ special vertices, and denote $m = |S|$. As the number of special vertices is larger than $kn$, we have $m \ge 1$. Call the colour in $S$, with the least number of somewhat special vertices, red, and let $t$ be this number. We shall show that $t \le 2n / (2k - 5)$, an assertion which would complete the proof of the claim.
            
            As $T$ is the number of somewhat special vertices, we have $T \ge mt$. Since there are at most $(k-m)n$ special vertices in colours outside of $S$, the number of special vertices in the colours in $S$ is at least $kn + T(k - 3.5) - (k-m)n \ge mn + mt(k - 3.5)$. It follows that for some colour in $S$, say blue, there are at least $n + t(k - 3.5)$ special vertices. Since the blue special and somewhat special vertices all belong to a minimum cover of one of two blue components, there are at most $2n$ such vertices. It follows that there are at most $n - t(k - 3.5)$ blue somewhat special vertices. By definition of $t$, we find that $n - t(k - 3.5) \ge t$, i.e.\ $t \le 2n/(k-5)$, as needed.
        \end{proof}
        
        Let red be the colour given by the above claim and let $C_i$ and $C_j$ be the red special components. Recall that all red special vertices need to be on the same side of $G$ (by \Cref{prop:basic-special}~\ref{itm:special-sides}). 
        
		Let us first assume that this side is $X$. In this case, $(\tau_i \cup \tau_j) \cap Y$ constitutes the set of all red somewhat special vertices of this colour and $|(\tau_i \cup \tau_j) \cap Y| \le 2n/(2k-5)$ by the above claim. Let $X':=X \setminus (\tau_i \cup \tau_j)$ and $Y':=(Y \cap (C_i \cup C_j)) \setminus (\tau_i \cup \tau_j).$ Note that $|X'|\ge |X|-2n =N-2n$. Since each of $C_i$ and $C_j$ contains at least $M-(k-1)n$ vertices of $Y$ (by \Cref{prop:basic-special}~\ref{itm:special-deg}), we have $|Y'| \ge 2(M-(k-1)n)-|(\tau_i \cup \tau_j) \cap Y| \ge 2(M-(k-1)n)-2n/(2k-5)$. Notice also that since neither $X'$ nor $Y'$ contain a vertex of $\tau_i$ or $\tau_j$, and $Y' \subseteq V(C_i) \cup V(C_j)$, there are no red edges between $X'$ and $Y'.$ This means that $G[X', Y']$ is a $(k-1)$-coloured complete bipartite graph with sides of sizes at least $N-2n$ and $2M-2n(k-1+1/(2k-5)),$ respectively, so by Condition \ref{itm:recursion-a} contains a monochromatic connected $(n+1)$-matching, a contradiction to our assumption that $G$ does not have a monochromatic connected $(n+1)$-matching.
        
		In the other case we analogously obtain $X',Y'$ with $|Y'|\ge M-2n$ and $|X'|\ge 2N-2n(k-1+1/(2k-5))n,$ which gives a contradiction, using Condition \ref{itm:recursion-b}.
    \end{proof}

    \begin{thm} \label{thm:3-3.5}
	    Let $k \ge 5$, $N = \ceil{\left(2k - 3 + \frac{1}{2k-5}\right)n}+1$ and $M = \ceil{\left(2k - 3.5 + \frac{1}{2k-5} \right)n}+1$. Then $K_{N, M} \overset{k}{\longrightarrow} \CM(n+1)$.
    \end{thm}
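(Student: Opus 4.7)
The plan is to apply \Cref{lem:recursion} with the stated values of $N$ and $M$, reducing the $k$-colour statement to two $(k-1)$-colour Ramsey statements that have already been established in this section. Conditions~\ref{itm:N-M-lower} and~\ref{itm:NM-lower} of the lemma are direct numerical checks: the first follows at once from $\frac{1}{2k-5} > 0$, while the second reduces, after setting $\alpha = 2k-3+\frac{1}{2k-5}$ and $\beta = 2k-3.5+\frac{1}{2k-5}$, to showing $\alpha\beta - (k-2)(\alpha+\beta) > k$; a short expansion gives $\alpha\beta - (k-2)(\alpha+\beta) = 1.5k - 1.5 + \frac{2.5}{2k-5} + \frac{1}{(2k-5)^2}$, which safely exceeds $k$ for every $k \ge 5$.

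The heart of the argument lies in the two recursive conditions, and the whole point of the extra $\frac{1}{2k-5}$ offset in the definitions of $N$ and $M$ is to make each recursion step of \Cref{lem:recursion} land precisely at the Ramsey threshold established earlier. For Condition~\ref{itm:recursion-a}, expanding $N' = N - 2n$ and $M' = 2M - 2n(k-1) - \frac{2n}{2k-5}$ gives $N' \ge (2k-5)n+1$ and $M' \ge (2k-5)n+2$, so both sides meet or exceed the symmetric threshold $r_{k-1}(n+1) \le (2k-5)n + 1$ of \Cref{thm:2k-3} applied to $k-1$ colours, and therefore $K_{N', M'} \overset{k-1}{\longrightarrow} \CM(n+1)$.

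For Condition~\ref{itm:recursion-b}, the analogous computation with the roles of $N$ and $M$ swapped gives $N' \ge (2k-4)n+2$ and $M' \ge \ceil{(2k-5.5)n + 1}$. These are, after swapping sides, exactly the two side sizes demanded by \Cref{thm:3.5-2} applied with $k' = k-1$ (permitted because $k \ge 5$), namely $(2(k-1)-2)n+1$ and $\ceil{(2(k-1)-3.5)n+1}$. Hence again $K_{N', M'} \overset{k-1}{\longrightarrow} \CM(n+1)$.

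The main obstacle is not conceptual but bookkeeping: one has to track the ceilings and the $+1$ adjustments in the definitions of $N$ and $M$, together with the extra $\frac{2n}{2k-5}$ losses in \Cref{lem:recursion}, and verify that after each recursion step the resulting bipartite graph exceeds the required Ramsey threshold by the needed integer margin. Once this is checked, all four hypotheses of \Cref{lem:recursion} hold and the conclusion $K_{N, M} \overset{k}{\longrightarrow} \CM(n+1)$ follows immediately.
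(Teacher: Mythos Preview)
Your proposal is correct and follows essentially the same route as the paper: you apply \Cref{lem:recursion}, verify Conditions~\ref{itm:N-M-lower} and~\ref{itm:NM-lower} by direct calculation, and reduce Conditions~\ref{itm:recursion-a} and~\ref{itm:recursion-b} to \Cref{thm:2k-3} and \Cref{thm:3.5-2} (with $k-1$ colours) respectively, exactly as the paper does. The only cosmetic difference is that for Condition~\ref{itm:NM-lower} you expand with the full $\alpha,\beta$ including the $\tfrac{1}{2k-5}$ terms, whereas the paper drops those terms and uses monotonicity of $NM/n-(k-2)(N+M)$ in $N,M$; both computations give the required strict inequality for $k\ge 5$.
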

      
    \begin{proof}
        We use \Cref{lem:recursion}. To that end, let us check that Conditions \ref{itm:N-M-lower} to \ref{itm:recursion-b} hold.
        Condition \ref{itm:N-M-lower}, that $N, M \ge (2k - 3.5)n$, clearly holds. Next, note that
        \begin{align}\label{eqn:specialt-1}
        \begin{split}
            \frac{NM}{n}-(N+M)(k-2)& > (2k-3)\left( 2k-3.5\right)n-\left(4k-6.5\right)n(k-2) \\ 
            & = kn+(k-5)n/2\\
            & \ge kn,
        \end{split}
        \end{align} 
        where we used the fact that $NM/n-(N+M)(k-2)$ is increasing for both $N$ and $M$ whenever $N, M \ge (k-2)n$, and the assumption that $k \ge 5$. This implies that Condition \ref{itm:NM-lower} holds.
        
        Next, notice that 
        \begin{align*}
            N-2n &\ge (2(k-1)-3)n+1 \\
            2M-2n(k-1) - \frac{2n}{2k-5} & \ge (2(k-1) - 3)n + 1,
        \end{align*}
        so Condition \ref{itm:recursion-a} holds by \Cref{thm:2k-3}.
        Similarly,
        \begin{align*}
            2N-2n(k-1) - \frac{2n}{2k-5} & \ge (2(k-1) - 2)n + 1 \\
            M - 2n & \ge (2(k-1) - 3.5)n + 1.
        \end{align*}  
        Thus, Condition \ref{itm:recursion-b} holds by \Cref{thm:3.5-2}, for which we require $k-1 \ge 4$.
        We have seen that all the conditions of \Cref{lem:recursion} hold. The proof follows.
    \end{proof}

    \begin{thm} \label{thm:2k-7/2}
		For $k \ge 6$ we have $r_k(n+1) \le \left(2k  - 3.5 + \frac{1}{k-2}\right)n + 1.$
	\end{thm}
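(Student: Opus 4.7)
The plan is to apply \Cref{lem:recursion} with $N = M = \lfloor (2k - 3.5 + \frac{1}{k-2}) n \rfloor + 1$, so that the two recursive subproblems produced by the lemma coincide by symmetry, and handle both via \Cref{thm:3-3.5} applied with $k - 1$ colours (permissible since $k - 1 \ge 5$). Note also that \Cref{lem:recursion} itself requires $k \ge 5$, which holds.

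Condition \ref{itm:N-M-lower} of \Cref{lem:recursion} is immediate since $\frac{1}{k-2} > 0$. For Condition \ref{itm:NM-lower}, write $a = 2k - 3.5 + \frac{1}{k-2}$, so $N, M \ge a n$. Since $a - 2(k-2) = \frac{1}{2} + \frac{1}{k-2}$ and $\frac{2k-3.5}{k-2} = 2 + \frac{1}{2(k-2)}$, a short expansion yields
\[
    \frac{NM}{n} - (k-2)(N+M) \;\ge\; a\bigl(a - 2(k-2)\bigr) n + O(1) \;=\; \left(k + \frac{1}{4} + \frac{1}{k-2} + \frac{1}{(k-2)^2}\right) n + O(1),
\]
which is strictly larger than $k n$, as required.

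The main work is to check Conditions \ref{itm:recursion-a} and \ref{itm:recursion-b}, which are symmetric under $N \leftrightarrow M$. \Cref{thm:3-3.5} with $k - 1$ colours produces the arrow $K_{N', M'} \overset{k-1}{\longrightarrow} \CM(n+1)$ whenever the smaller side $M'$ is at least $(2k - 5.5 + \frac{1}{2k-7})n + 1$ (after taking ceilings) and the larger side $N'$ is at least $(2k - 5 + \frac{1}{2k-7})n + 1$. The smaller side supplied by \Cref{lem:recursion} is $N - 2n = (2k - 5.5 + \frac{1}{k-2}) n + O(1)$, which suffices because $\frac{1}{k-2} \ge \frac{1}{2k-7}$ for $k \ge 5$. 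The larger side is
\[
    2M - 2n(k-1) - \frac{2n}{2k-5} \;=\; \left(2k - 5 + \frac{2}{k-2} - \frac{2}{2k-5}\right)n + O(1),
\]
so the corresponding requirement reduces (the $O(1)$ terms and ceilings contribute only lower order) to the inequality
\[
    \frac{2}{k-2} - \frac{2}{2k-5} \;=\; \frac{2(k-3)}{(k-2)(2k-5)} \;\ge\; \frac{1}{2k-7}.
\]

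The crux of the argument, and the source of the hypothesis $k \ge 6$, is this final inequality. Cross-multiplying reduces it to the quadratic $2 k^2 - 17 k + 32 \ge 0$, whose larger root is $(17 + \sqrt{33})/4 \approx 5.69$. Thus the inequality, and hence the entire induction, closes precisely for integer $k \ge 6$, yielding the stated bound.
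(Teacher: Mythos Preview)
Your proof is correct and follows essentially the same route as the paper: apply \Cref{lem:recursion} with $N=M$, verify Conditions \ref{itm:N-M-lower} and \ref{itm:NM-lower} directly, and feed both (symmetric) recursive subproblems into \Cref{thm:3-3.5} with $k-1$ colours. Your reduction of the larger-side requirement to the inequality $\frac{2}{k-2}-\frac{2}{2k-5}\ge\frac{1}{2k-7}$ and its quadratic form $2k^2-17k+32\ge0$ makes explicit precisely the step at which the paper invokes ``using $k\ge 6$''; the only cosmetic difference is that the paper tracks the additive constants explicitly rather than writing $O(1)$.
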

    
    \begin{proof}
        
		We again use \Cref{lem:recursion} with $M = N = \left(2k  - 3.5 + \frac{1}{k-2}\right)n + 1$. Note that Condition \ref{itm:N-M-lower} of \Cref{lem:recursion} holds, as $M, N \ge (2k - 3.5)n$. Next, note that
        \begin{align*}
            \frac{N^2}{n} - 2N(k-2) & > \left( 2k - 3.5 + \frac{1}{k-2} - 2(k-2)\right)N \\ 
            & = \left(\frac{1}{2}+\frac{1}{k-2}\right)N \\
            & > \left(k + \frac{1}{4} + \frac{1.5}{k-2}\right)n \\
            & \ge kn,
        \end{align*}  
        so Condition \ref{itm:NM-lower} holds. Finally, note that
        \begin{align*}
            N - 2n 
            \ge \left(2(k-1) - 3.5 + \frac{1}{k-2}\right)n + 1 
            \ge \left(2(k-1) - 3.5 + \frac{1}{2(k-1) - 5}\right)n + 1,
        \end{align*}
        as $k \ge 5$, and, using $k \ge 6$,
        \begin{align*}
            2N - 2n(k-1) - \frac{2n}{2k-5} 
            & = \left(2k - 5 + \frac{2}{k-2} - \frac{2}{2k-5}\right)n + 2 \\
            & \ge \left(2(k-1) - 3 + \frac{1}{2(k-1)-5}\right)n + 1.
        \end{align*}
        It follows from \Cref{thm:3-3.5} that Conditions \ref{itm:recursion-a} and \ref{itm:recursion-b} hold (note that they are equivalent here, since $N = M$). The proof now follows from \Cref{lem:recursion}.
    \end{proof}
    We note that it is not hard to obtain a slightly weaker result, that still beats \Cref{thm:path-2k-3}, and applies for $k=5$ as well. 
    %(there are two places in the above proof where we use for $k \ge 6$, one is that $k-1\ge 5$ so \Cref{thm:3-3.5} applies and the other is the last inequality, which does not hold for $k = 5$; these can be overcome by proving a slightly weaker version of \Cref{thm:3-3.5} for the case of $k=4$ and modifying the term $1/(k-2)$ slightly).
    %We omit further details.
    
\section{From connected matchings to paths and cycles}\label{sec:regularity}
    
    In \cite{luczak99-con-match} {\L}uczak introduced a method that reduces problems about paths and cycles to problems about connected matchings. As this method has become standard, we do not give precise details here, instead we give a brief overview of how to make use of \Cref{thm:2k-3,thm:2k-7/2} to deduce \Cref{thm:path-2k-3,thm:path-2k-3.5} (see, e.g., our previous paper \cite{3-bip-path} on this subject for more details in a very similar setup). 
    
    Let $k$ be an integer, and suppose that we have an upper bound of the form $r_k(n+1) \le \alpha_k n$ which holds for every large enough $n$ (this is what we get from \Cref{thm:2k-3,thm:2k-7/2}). Let $n$ be very large, and consider a $k$-colouring of $K_{N,N}$, denoted $G$, where $N$ is a bit larger than $\alpha_k n$. Our aim is to find a cycle (or path) of length $2n$. 
    
    Apply Szemer\'edi's regularity lemma (see \cite{sze1978-regularity}) to $G$. We obtain a partition of the vertices into a not-too-small and not-too-large number of clusters of almost equal size, such that the graph between almost every pair of clusters behaves almost randomly in every colour. Now consider the graph $\G$, whose vertices represent the clusters, and for which there is an edge of colour $c$ between two clusters if the graph of $c$-coloured edges between them behaves randomly and is somewhat dense. By removing a few clusters and insisting that clusters are subsets of one of the parts of $G$, we may assume that $\G$ is a balanced bipartite graph, whose every vertex is joined to almost every vertex on the other side.
    
    {\L}uczak's interesting observation is that a monochromatic connected matching $\M$ in this graph can be lifted to a path that covers almost all the vertices in the clusters of $\M$ and very few other vertices. Thus, in order to find a path (or, with a little more effort, a cycle) of length $2n$ in $G$, it suffices to find a connected matching of size almost $\frac{1}{2\alpha_k}|\G|$ in $\G$. 
    
    So, our task boils down to proving that $\G$ has a monochromatic connected matching of the required size. The next theorem allows us to reduce the problem of finding monochromatic connected matchings in almost complete bipartite graphs to the same problem in complete bipartite graphs. Thus, by the assumption that $r_k(n+1) \le \alpha_k n$, we are done. \Cref{thm:path-2k-3,thm:path-2k-3.5} follow by the above argument, the following theorem, and \Cref{thm:2k-3,thm:2k-7/2}.

    \begin{thm} \label{thm:cm-almost-complete}
		Let $0< \eps < (8k)^{-2k} = c_k^{-1}$ and $N \ge (1+\eps c_k)r_k(n+1).$ Let $G$ be a subgraph of $K_{N,N}$ of minimum degree at least $N-\eps n.$ Then, in every $k$-colouring of $G$, there is a monochromatic connected $(n+1)$-matching.
	\end{thm}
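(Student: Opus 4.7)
The plan is to argue by contradiction: suppose that $G$ has no monochromatic connected $(n+1)$-matching. Under this assumption, K\"onig's theorem implies that every monochromatic component of $G$ has minimum cover of size at most $n$. I will extend the $k$-coloring of $G$ to a $k$-coloring of some $K_{R,R}$ with $R \ge r_k(n+1)$ that preserves this cover bound, yielding a $k$-coloring of $K_{R,R}$ with no monochromatic connected $(n+1)$-matching, contradicting the definition of $r_k(n+1)$.

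First I would identify subsets $X' \subseteq X$ and $Y' \subseteq Y$, each of size at least $r_k(n+1)$, by discarding at most $\eps c_k r_k(n+1)$ "bad" vertices from each side. This is made possible by $N \ge (1+\eps c_k)r_k(n+1)$ together with $\eps c_k < 1$. Then I would color each remaining non-edge $(x,y) \in X' \times Y'$ with a color $c = c(x,y)$ chosen so that (a) $x$ and $y$ already lie in the same $c$-monochromatic component of $G$, and (b) at least one of $x, y$ lies in the minimum cover of that component. Under (a) and (b), coloring $(x,y)$ by $c$ neither merges any $c$-components nor forces the cover to grow; hence every monochromatic component of the extended $k$-coloring of $K_{|X'|,|Y'|}$ still has cover at most $n$, producing the desired contradiction.

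The hard part will be proving that the valid color $c$ exists for every non-edge in $X' \times Y'$ -- that is, that $X',Y'$ can be selected so that every remaining non-edge admits an assignment satisfying both (a) and (b). Here the min-degree condition $\ge N - \eps n$ and the structural lemmas of \Cref{sec:cm} come into play. In particular, \Cref{claim:high-deg-col} guarantees that each vertex is a cover vertex in every colour in which it has degree at least $n+1$; combined with pigeonhole on the $k$ colours (using that each vertex has $G$-degree at least $N - \eps n \ge k(n+1)$), every vertex is a cover vertex in at least one colour. A counting argument similar to \Cref{lem:special-vertices}, applied to the vertices whose non-edges have no valid colour, shows that these "problematic" vertices are few, and they can be iteratively removed into the slack.

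The constant $c_k=(8k)^{2k}$ arises naturally from this iteration: at each round one loses roughly a factor of $8k$ (coming from the interplay of the $k$ colours, the two sides of the bipartition and the factor of $2$ needed to convert high monochromatic degree into cover membership), and roughly $2k$ rounds are required to eliminate all problematic non-edges. The eventual residual removal from each side is bounded by $\eps c_k r_k(n+1)$, which is exactly the slack provided by the hypothesis $N \ge (1+\eps c_k) r_k(n+1)$.
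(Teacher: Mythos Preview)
The paper omits this proof entirely, pointing to \cite{3-bip-path} for the (three-colour) argument that carries over verbatim.  So there is no proof here to compare against; I can only assess whether your sketch would actually go through.

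Your high-level strategy --- assume no monochromatic connected $(n+1)$-matching, extend the colouring to a complete bipartite graph on $\ge r_k(n+1)$ vertices per side while keeping every monochromatic component's cover of size $\le n$, then contradict the definition of $r_k$ --- is a natural one and may well be what \cite{3-bip-path} does.  But the sketch has a real gap precisely at the step you yourself flag as ``the hard part''.  For each surviving non-edge $xy$ you need a colour $c$ satisfying \emph{both} (a) $x,y$ already lie in the same $c$-component and (b) one of them lies in that component's cover.  \Cref{claim:high-deg-col} gives only (b): it tells you that a vertex with $c$-degree $\ge n+1$ is a cover vertex in its $c$-component, but says nothing about whether the \emph{other} endpoint of the non-edge lies in that same component.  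Without (a), colouring $xy$ with $c$ may merge two $c$-components whose covers together exceed $n$, destroying the invariant.  You offer no mechanism for securing (a).

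The remaining justification is also loose.  \Cref{lem:special-vertices} counts vertices lying in exactly one $\tau_i$ in a \emph{complete} bipartite graph; it is not a statement about ``vertices whose incident non-edges admit no valid colour'', and it is unclear what the analogue would even be in your setting.  The explanation that $c_k=(8k)^{2k}$ comes from ``roughly $2k$ rounds each losing a factor $8k$'' reads as reverse-engineering the constant rather than deriving it.  In short: the framework is reasonable, but the crux --- why a colour satisfying (a) and (b) simultaneously can always be found after deleting only $\eps c_k r_k(n+1)$ vertices per side --- is asserted, not argued.
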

    
    We omit the proof of this theorem since it is essentially the same  as the one given in our previous paper \cite{3-bip-path}, albeit only for $3$ colours. In \cite{3-bip-path} we also discuss the by now standard connected matching method of {\L}uczak described in this section in more details.

\section{Lower bounds} \label{sec:lower-bounds}

    The authors in \cite{debiasio-multicolour} were interested in the function $f(k)$, defined to be the minimum $N$ such that in every $k$-colouring of $K_{N, N}$ there is a monochromatic $P_4$ (i.e.\ a path of length $3$). Since a bipartite graph is $P_4$-free if and only if it is a star forest, $f(k) - 1$ is the maximum $N$ such that $K_{N, N}$ can be $k$-coloured in such a way that all monochromatic components are stars. They determined $f(k)$ for every $k$, as follows.
    \begin{align} \label{eqn:star-ramsey}
        f(k) = \left\{
            \begin{array}{ll}
                k+1 & k \le 3 \\
                6 & k = 4 \\
                2k - 3 & k \ge 5.
            \end{array}
        \right.
    \end{align}
	The function $f(k)$ is relevant to the study of bipartite Ramsey numbers of paths, cycles, or connected matchings. Indeed, we note the following observation made in \cite{debiasio-multicolour}; here $\rbip{k}{H}$ denotes the $k$-colour bipartite Ramsey number of $H$.
    \begin{obs} \label{obs:lower-bds}
        Let $N = (f(k) - 1)n$. There is a $k$-colouring of $K_{N,N}$ without a monochromatic connected $(n+1)$-matching, which can be obtained by blowing up a construction for $f(k)$, by replacing each vertex by $n$ new vertices. In particular, by \eqref{eqn:star-ramsey}, we have
		\begin{align} \label{eqn:ramsey-lower-bound}
            \rbip{k}{C_{2(n+1)}} \,\ge\, \rbip{k}{P_{2(n+1)}} \,\ge\,  \rbip{k}{\CM(n+1)} \,>\, 
            \left\{
                \begin{array}{ll}
                    kn & k \le 3 \\
                    5n & k = 4 \\
                    (2k-4)n & k \ge 5.
                \end{array}
            \right.
        \end{align}
    \end{obs}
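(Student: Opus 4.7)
My plan is to use the blow-up construction suggested by the statement. By the definition of $f(k)$, together with the standard fact that $P_4$-free bipartite graphs are precisely star forests, there is a $k$-colouring of $K_{f(k)-1, f(k)-1}$ in which every monochromatic component is a star. Replace every vertex $u$ by an independent set $V_u$ of $n$ new vertices, and assign to each of the $n^2$ edges between $V_u$ and $V_v$ the colour originally given to $uv$. The result is a $k$-colouring of $K_{N,N}$ with $N = (f(k)-1)n$.

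The key observation is that each monochromatic component in the blow-up is itself the blow-up of a single monochromatic component of the original colouring, and hence the blow-up of a star. Concretely, if such a star has centre $u$ and some $m \ge 1$ leaves $v_1, \dots, v_m$, then the corresponding monochromatic component of the blow-up is the complete bipartite graph between $V_u$ (of size $n$) and $V_{v_1} \cup \cdots \cup V_{v_m}$ (of size $mn$). Its matching number is exactly $n$, so no monochromatic connected $(n+1)$-matching exists, giving $\rbip{k}{\CM(n+1)} > (f(k)-1)n$.

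Next, the chain $\rbip{k}{C_{2(n+1)}} \ge \rbip{k}{P_{2(n+1)}} \ge \rbip{k}{\CM(n+1)}$ is a standard monotonicity argument: $C_{2(n+1)}$ contains $P_{2(n+1)}$ as a subgraph, and a path on $2(n+1)$ vertices contains a connected $(n+1)$-matching (every other edge along the path). Substituting the three cases of $f(k)$ from \eqref{eqn:star-ramsey} gives the three claimed lower bounds.

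There is no real obstacle: the argument is entirely structural. The only point worth verifying carefully is the claim that monochromatic components of the blow-up are exactly blow-ups of monochromatic components of the original, which follows because each $V_u$ lies in a single monochromatic component of each colour and its $c$-neighbourhood is the blow-up of the $c$-neighbourhood of $u$.
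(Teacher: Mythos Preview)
Your proof is correct and follows exactly the approach the paper intends: the observation is stated without a separate proof in the paper, with the blow-up construction and its justification embedded in the statement itself. Your write-up simply spells out in detail why the blown-up stars have matching number $n$ and why the chain of inequalities holds, which is precisely what the paper leaves implicit.
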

    This establishes the lower bound in \Cref{cor:4-col}. It also shows that the upper bounds in \Cref{thm:path-2k-3,thm:path-2k-3.5} are close to being tight. 

    We believe that the lower bounds in \Cref{obs:lower-bds} could be improved, possibly close to $(2k - 3.5)n$ (thus almost matching the upper bound in \Cref{thm:path-2k-3.5}). For five colours, we indeed obtain such an improvement.
    
    Initially, we found a better example than the one mentioned above for five colours by a computer, using a technique called simulated annealing. The example we present here is constructed by hand, based on observations of the properties of the example found by a computer.
    
    While, by \eqref{eqn:star-ramsey}, it is not possible to $5$-colour $K_{7,7}$ such that all monochromatic components are stars, such a colouring exists for the graph obtained from $K_{7,7}$ by removing a single edge, see \Cref{fig:eg1} below.
    \begin{figure}[ht]
        \caption{A $5$-colouring of the graph $K_{7,7}$ with the edge $u_0v_0$ removed, where all the monochromatic components are stars.}
        \includegraphics[scale=0.8]{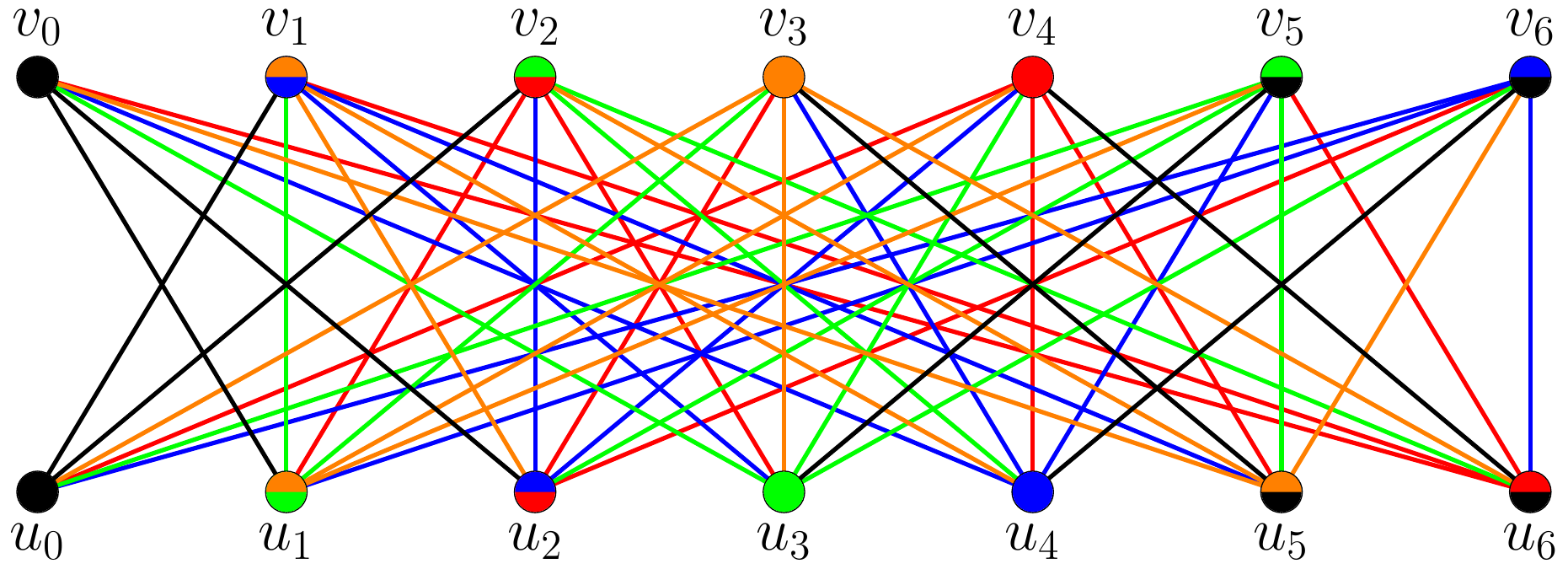}
        \label{fig:eg1}
	\end{figure}
    \begin{figure}[ht]
        \caption{A decomposition of the colouring given in \Cref{fig:eg1} making it more readable (the numbers correspond to the indices of the vertices in \Cref{fig:eg1}; the vertices on the left-hand side are permuted so as to make the colouring in each subfigure look nicer).}
        \includegraphics[scale=.66]{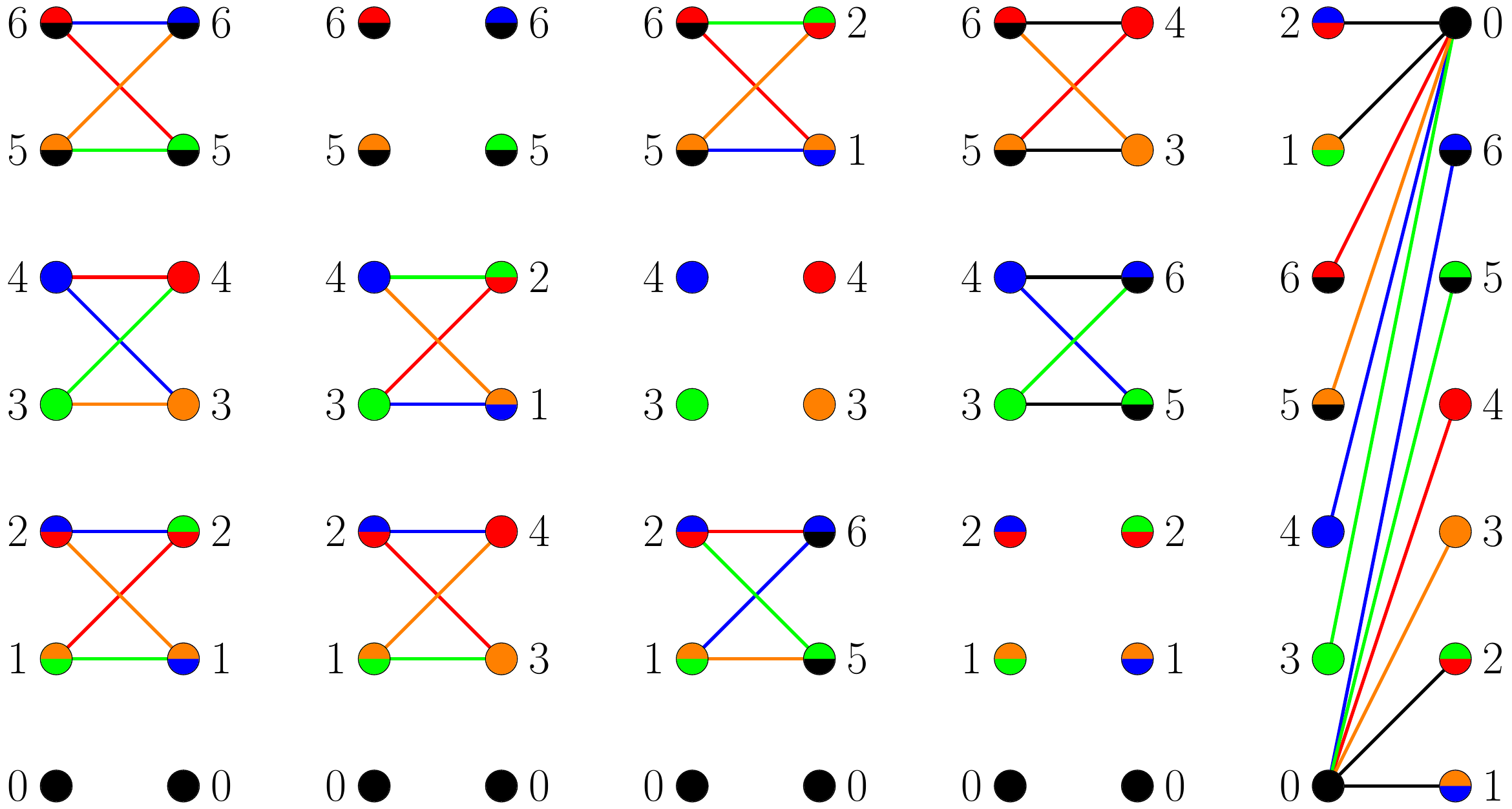}
        \label{fig:eg2}
	\end{figure}
	
	To make it easier to check the claimed properties of the figure, we coloured the vertices as follows: for each monochromatic component, we know it is a star, and we colour a centre of the star by the colour of the component (for stars that consist of a single edge we pick exactly one of its vertices). In order to check that all monochromatic components in this example are stars one only needs to check that for every edge one of its endpoints is coloured in its colour (this might be easier using \Cref{fig:eg2}) and that every vertex has degree exactly one in all colours it is not coloured in (easier in \Cref{fig:eg1}).
    
    We are now ready to prove \Cref{thm:lower-5}, which asserts that the bipartite Ramsey number of a path or a cycle of order $2(n+1)$ is larger than $6.5n$.

    \begin{proof}[ of \Cref{thm:lower-5}]
		Note that the statement holds for $n = 1$, by \eqref{eqn:ramsey-lower-bound}.

        Let $G$ be the graph depicted in \Cref{fig:eg1}, with an additional black edge between $u_0$ and $v_0$. Note that all monochromatic components in $G$ are stars, with the exception of the black component that contains $u_0$ and $v_0$, for which the set $\{v_0, u_0\}$ is a vertex cover.
        
        Replace each vertex $v_i$ (resp.\ $u_i$) with a set of vertices $V_i$ (resp.\ $U_i$) of size $n$ if $i \ge 1$ and of size $\floor{n/2}$ if $i=0$. Next, add all edges between $U_i$ and $V_j$ and colour them by the colour of $u_i v_j$ in $G$. Call the resulting graph $H$.
        
        The graph $H$ is a $5$-coloured $K_{N, N}$, where $N = \floor{6.5n}$, and we claim that $H$ does not contain a monochromatic connected $(n+1)$-matching. Indeed, note that every monochromatic connected component in $H$ is a blow-up of a monochromatic connected component in $G$. Now, the black component that contains $U_0 \cup V_0$ (i.e.\ that corresponds to black component that contains $u_0 v_0$) has a cover $U_0 \cup V_0$, of size $2 \floor{n/2} \le n$. All other components are blow-ups of a star by sets of size at most $n$, in particular they all have covers of size at most $n$. Since, as we have now shown, every connected component has a cover of size at most $n$, it follows that there are no monochromatic connected $(n+1)$-matchings, as needed.
    \end{proof}

\section{Concluding remarks and open problems} \label{sec:conc-remarks}
 
    In this paper we determined, asymptotically, the $4$-colour bipartite Ramsey number of even cycles and consequently for paths. Specifically, we showed here that $\rbip{k}{C_{2n}}=(2k-3+o(1))n$ holds for $k=4$, a bound which also holds for $k = 3$ (as we showed in \cite{3-bip-path}). This is in contrast to the non-bipartite Ramsey numbers, where already the four colours case remains unresolved. We also showed that the behaviour changes as $k$ grows, by showing that $r_k(n+1) \le (2k-3.5+1/(k-2)+o(1))n$ for $k \ge 6$, which is quite close to the lower bound of $(2k-4)n+1$ from \cite{debiasio-multicolour}. We believe that our bound might be asymptotically optimal, which is in part supported by our example for five colours which shows that $r_k(n+1) \ge (2k-3.5)n$ when $k=5$.  Our approach for obtaining upper bounds relies on having at least $kn+1$ special vertices, which guarantees the existence of two components of the same colour that contain special vertices. As the smallest value of $N$ for which this is no longer guaranteed to occur is $(2k - 3.5 + 1/k + O(1/k^2))n$, this value seems to be a natural barrier for our arguments, giving further evidence towards this bound being close to the truth. The most natural next question is to determine what is the correct behaviour for five or more colours. 

    Another interesting direction might be to strengthen our results by obtaining exact bounds for long enough paths or cycles, possibly by obtaining a stability version of our results, similarly to \cite{kohayakawa-simonovits-skokan,ramsey3path,benevides-skokan}.

\providecommand{\bysame}{\leavevmode\hbox to3em{\hrulefill}\thinspace}
\providecommand{\MR}{\relax\ifhmode\unskip\space\fi MR }
% \MRhref is called by the amsart/book/proc definition of \MR.
\providecommand{\MRhref}[2]{%
  \href{http://www.ams.org/mathscinet-getitem?mr=#1}{#2}
}
\providecommand{\href}[2]{#2}

\end{document}